\newtheorem{dummytheorem}{Dummy-Theorem}[section]
\newcommand{\proofendsign}{$\Box$} 
\newtheorem{lemma}[dummytheorem]{Lemma}
\newtheorem{theorem}[dummytheorem]{Theorem}
\newenvironment{proof}{{\noindent \bf Proof }}
 {{\hspace*{\fill}\proofendsign\par\bigskip}}
\newtheorem{remarknorm}[dummytheorem]{Remark}
\newtheorem{examplenorm}[dummytheorem]{Example}
\newcommand{\V}{\mathbf{V}}
\newcommand{\N}{\mathbb{N}}
\newcommand{\Z}{\mathbb{Z}}
\newcommand{\R}{\mathbb{R}}
\newcommand{\F}{\mathbb{F}}
\newcommand{\D}{\mathbb{D}}
\newcommand{\I}{\mathbb{I}}
\newcommand{\oF}{\overline{F}}
\newcommand{\pr}{\mathbb{P}}
\newcommand{\ex}{\mathbb{E}}
\newcommand{\eins}{\mathbbm{1}}
\newcommand{\cadlag}{c\`adl\`ag}
\begin{document}


\title{Marcinkiewicz-Zygmund and ordinary strong laws for empirical distribution functions and plug-in estimators}
\author{Henryk Z\"ahle}
\date{\footnotesize Saarland University\\ Department of Mathematics\\ Postfach 151150\\ D-66041 Saarbr\"ucken\\ Germany\\
{\tt zaehle@math.uni-sb.de}}

\maketitle

\begin{abstract}
Both Marcinkiewicz-Zygmund strong laws of large numbers (MZ-SLLNs) and ordinary strong laws of large numbers (SLLNs) for plug-in estimators of general statistical functionals are derived. It is used that if a statistical functional is ``sufficiently regular'', then a (MZ-) SLLN for the estimator of the unknown distribution function yields a (MZ-) SLLN for the corresponding plug-in estimator. It is in particular shown that many L-, V- and risk functionals are ``sufficiently regular'', and that known results on the strong convergence of the empirical process of $\alpha$-mixing random variables can be improved. The presented approach does not only cover some known results but also provides some new strong laws for plug-in estimators of particular statistical functionals.
\end{abstract}

{\bf Keywords:} statistical functional, plug-in estimator, Marcinkiewicz-Zygmund strong law of large numbers, ordinary strong law of large numbers, empirical process, $\alpha$-mixing, function bracket, L-statistic, law-invariant risk measure, V-statistics


\newpage

\section{Introduction}\label{Introduction}

Let $\F$ be a class of distribution functions on the real line, and $T:\F\to\V'$ be a statistical functional, where $(\V',\|\cdot\|_{\V'})$ is a normed vector space. Let $(X_i)_{i\in\N}$ be a sequence of identically distributed real random variables on some probability space $(\Omega,{\cal F},\pr)$ with distribution function $F\in\F$. If $\widehat F_n$ denotes a reasonable estimator for $F$ based on the first $n$ observations $X_1,\ldots,X_n$, then $T(\widehat F_n)$ can provide a reasonable estimator for $T(F)$. In the context of nonparametric statistics, a central question concerns the rate of almost sure convergence of the plug-in estimator $T(\widehat F_n)$ to $T(F)$. That is, one wonders for which exponents $r'\ge 0$ the convergence
\begin{equation}\label{MZ-LLN-Introduction}
    n^{r'}\big\|T(\widehat F_n)-T(F)\|_{\V'}\,\longrightarrow\,0\qquad\pr\mbox{-a.s.}
\end{equation}
holds, where it is assumed that the left-hand side is ${\cal F}$-measurable for every $n\in\N$. This article is concerned with the convergence in (\ref{MZ-LLN-Introduction}) for both $r'>0$ and $r'=0$ and general statistical functionals $T$. In the case $r'>0$ the convergence in (\ref{MZ-LLN-Introduction}) can be seen as a Marcinkiewicz-Zygmund strong law of large numbers (MZ-SLLNs), and in the case $r'=0$ it can be seen as an ordinary strong law of large numbers (SLLNs).

Let $(\V,\|\cdot\|_\V)$ be a normed vector space with $\V$ a class of real functions on $\R$, and assume that the difference $F_1-F_2$ of every two distribution functions $F_1,F_2\in\F$  are elements of $\V$. So $\|\cdot\|_\V$ can in particular be seen as a metric on $\F$. Assume that $\widehat F_n$ is a $\F$-valued estimator for $F$ based on $X_1,\ldots,X_n$, that $\|\widehat F_n-F\|_\V$ is ${\cal F}$-measurable for every $n\in\N$, and that
\begin{equation}\label{MZ LLN for theta n}
    n^r\big\|\widehat F_n-F\big\|_\V\,\longrightarrow\,0\qquad\pr\mbox{-a.s.}
\end{equation}
for some $r\ge 0$. Finally, let $\widehat\F_n:=\{\widehat F_n(\omega):\omega\in\Omega\}$ be the range of $\widehat F_n$, and $\widehat\F$ be the union of the $\widehat\F_n$, $n\in\N$. Then, if for every sequence $(F_n)\subset\widehat\F$ with $\|F_n-F\|_\V\to 0$ we have that
\begin{eqnarray}\label{def eq for QC - def}
    \big\|T(F_n)-T(F)\big\|_{\V'}\,=\,{\cal O}\big(\|F_n-F\|_\V^\beta\big)
\end{eqnarray}
for some fixed $\beta>0$, we obtain by choosing $F_n:=\widehat F_n$ ($\omega$-wise) that (\ref{MZ-LLN-Introduction}) holds for $r'=r\beta$. If for every sequence $(F_n)\subset\widehat\F$ with $\|F_n-F\|_\V\to 0$ we only have that
\begin{eqnarray}\label{def eq for QC - def - ordinary}
    \big\|T(F_n)-T(F)\|_{\V'}\,=\,o(1),
\end{eqnarray}
then we obtain that (\ref{MZ-LLN-Introduction}) holds at least for $r'=0$; again choose $F_n:=\widehat F_n$ ($\omega$-wise). That is, in order to obtain a MZ-SLLN for $T(\widehat F_n)$ it suffices to have a MZ-SLLN for $\widehat F_n$ and to verify (\ref{def eq for QC - def}), and in order to obtain a SLLN for $T(\widehat F_n)$ it suffices to have a SLLN for $\widehat F_n$ and to verify (\ref{def eq for QC - def - ordinary}). We refer to (\ref{def eq for QC - def}) as {\em H\"older-$\beta$ continuity of $T$ at $F$ w.r.t.\ $(\|\cdot\|_\V,\|\cdot\|_{\V'})$ and $\widehat\F$}, and to (\ref{def eq for QC - def - ordinary}) as {\em continuity of $T$ at $F$ w.r.t.\ $(\|\cdot\|_\V,\|\cdot\|_{\V'})$ and $\widehat\F$}.

Concerning $\widehat F_n$ we will restrict ourselves to the empirical distribution function. That is, from now on we assume that $\widehat F_n=\frac{1}{n}\sum_{i=1}^n\eins_{[X_i,\infty)}$. In particular, $\widehat\F$ will always be contained in the class of all empirical distribution functions $\frac{1}{n}\sum_{i=1}^n\eins_{[x_i,\infty)}$ with $n\in\N$ and $x_1,\ldots,x_n\in\R$. The rest of the article is organized as follows. In Section \ref{Strong laws for widehat F n} we will first present some results that illustrate (\ref{MZ LLN for theta n}) for uniform and nonuniform sup-norms. Thereafter, in Section \ref{Strong laws for widehat T F n}, we will show that several statistical functionals are (H\"older-$\beta$) continuous w.r.t.\ uniform or nonuniform sup-norms. The proofs of the results of Section \ref{Strong laws for widehat F n} will be given in Sections \ref{appendix a1}--\ref{appendix a2}.


\section{Strong laws for $\widehat F_n$}\label{Strong laws for widehat F n}

An intrinsic example for $(\V,\|\cdot\|_\V)$ is the normed vector space $(\D_\phi,\|\cdot\|_\phi)$ of all \cadlag\ functions $\psi$ with $\|\psi\|_\phi<\infty$, where $\|\psi\|_\phi:=\|\psi\phi\|_\infty$ refers to the nonuniform sup-norm based on some weight function $\phi$. By {\it weight function} we mean any continuous function $\phi:\R\to\R_+$ which is bounded away from zero, i.e.\ $\phi(\cdot)\ge\varepsilon$ for some $\varepsilon>0$, and u-shaped, i.e.\ nonincreasing on $(-\infty,x_\phi]$ and nondecreasing on $[x_\phi,\infty)$ for some $x_\phi\in\R$. In Section \ref{Strong laws for widehat T F n} we will see that many statistical functionals are (H\"older-$\beta$) continuous w.r.t.\ $(\|\cdot\|_\phi,|\cdot|)$  and $\widehat\F$. Here we will first present some results that illustrate (\ref{MZ LLN for theta n}) for $\|\cdot\|_\V=\|\cdot\|_\phi$.

We begin with the case of independent observations. The following result strongly relies on \cite[Theorem 7.3]{Andersen et al 1988}. The proof can be found in Section \ref{appendix a1}.

\begin{theorem}\label{generalized GC}
Let $(X_i)$ be an i.i.d.\ sequence of random variables with distribution function $F$. Let $\phi$ be a weight function and $r\in[0,\frac{1}{2})$. If $\int_{-\infty}^\infty\phi(x)^{1/(1-r)}dF(x)<\infty$,
then
$$
    n^{r}\|\widehat F_n-F\|_{\phi}\longrightarrow 0\qquad\pr\mbox{-a.s.}
$$
\end{theorem}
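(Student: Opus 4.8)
The plan is to reduce the weighted convergence to a combination of a uniform Glivenko–Cantelli-type result on bounded regions together with tail estimates that control the contribution of $\widehat F_n - F$ where the weight $\phi$ is large. First I would fix $\varepsilon>0$ with $\phi\ge\varepsilon$ and write, for a truncation level $a>0$ to be chosen,
\[
    n^r\|\widehat F_n - F\|_\phi \;\le\; n^r \sup_{|x|\le a} |\widehat F_n(x)-F(x)|\,\phi(x) \;+\; n^r \sup_{|x|>a} |\widehat F_n(x)-F(x)|\,\phi(x).
\]
On $\{|x|\le a\}$ the weight is bounded, so the first term is handled by a Marcinkiewicz–Zygmund strong law for the uniform deviation $\sup_x|\widehat F_n(x)-F(x)|$; this is exactly the regime where \cite[Theorem 7.3]{Andersen et al 1988} applies, giving $n^r\sup_x|\widehat F_n(x)-F(x)|\to 0$ $\pr$-a.s.\ for $r<1/2$ under a moment condition. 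The main work is the tail term: for $x$ in the right tail one bounds $|\widehat F_n(x)-F(x)| \le (1-\widehat F_n(x)) + (1-F(x))$ and uses that $1-F(x)$ is small while $\phi(x)$ grows only at a controlled rate relative to $1-F(x)$, the link being precisely the integrability hypothesis $\int \phi(x)^{1/(1-r)}\,dF(x)<\infty$.

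Concretely, I would exploit the identity $n\big(1-\widehat F_n(x)\big) = \#\{i\le n : X_i > x\}$ and relate $\sup_{x>a}(1-\widehat F_n(x))\phi(x)$ to $\max_{i\le n}\phi(X_i)\eins_{\{X_i>a\}}$ (and symmetrically on the left tail), since $\phi$ is u-shaped and hence monotone on each tail. Thus
\[
    n^r \sup_{x>a} \big(1-\widehat F_n(x)\big)\phi(x) \;\le\; n^{r-1}\,\sum_{i=1}^n \phi(X_i)\,\eins_{\{X_i>a\}}\,\cdot\,(\text{something}),
\]
more carefully $n^r(1-\widehat F_n(x))\phi(x)$ is dominated by $n^{r-1}\sum_{i=1}^n \phi(X_i)^{?}\eins_{\{X_i>x\}}$ — one has to be slightly careful, but the point is that a Marcinkiewicz–Zygmund SLLN for the i.i.d.\ sequence $\phi(X_i)^{1/(1-r)}$ (which is integrable by hypothesis, noting $1/(1-r)\in[1,2)$ for $r\in[0,1/2)$) gives $n^{-(1-r)}\sum_{i=1}^n \phi(X_i)^{1/(1-r)} \to 0$ $\pr$-a.s., equivalently $n^{r-1}\big(\sum_{i=1}^n \phi(X_i)^{1/(1-r)}\big)^{1-r}\cdot n^{(1-r)^2 + r - 1}\to 0$; choosing exponents so that $n^{-1}\sum \phi(X_i)^{1/(1-r)}\cdot n^{?}$ matches, one extracts that the tail term tends to $0$ almost surely. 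The deterministic part $n^r(1-F(x))\phi(x)$ for $x>a$ is $o(1)$ uniformly once $a$ is large, because $\int\phi^{1/(1-r)}dF<\infty$ forces $(1-F(x))\phi(x)^{1/(1-r)}\to 0$, hence $(1-F(x))\phi(x)=\big((1-F(x))\phi(x)^{1/(1-r)}\big)^{1-r}(1-F(x))^{r}\to 0$, and multiplying by $n^r$ is compensated by working instead with the version where the $n^r$ is absorbed — more precisely one shows directly $n^r(1-F(x))\le n^r(1-F(a))$ which does \emph{not} vanish, so in fact the deterministic tail must be merged with the stochastic one: the honest statement is that $\sup_{x>a}\big(1-\widehat F_n(x)\big)\phi(x)$ is itself $o(n^{-r})$ a.s., and $1-F$ is dominated by $1-\widehat F_{n}$ in an averaged sense via the same MZ-SLLN.

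The main obstacle I anticipate is this bookkeeping in the tail: making rigorous the passage from the supremum $\sup_{x>a}(1-\widehat F_n(x))\phi(x)$ to a quantity governed by the i.i.d.\ variables $\phi(X_i)^{1/(1-r)}$ with the \emph{correct} power, so that the Marcinkiewicz–Zygmund SLLN for sums (valid since the variables have a finite moment of order $p=1/(1-r)\in[1,2)$ and the normalization is $n^{1/p}=n^{1/(1-r)}$, matching the rate $n^r$ after taking the $(1-r)$-th root) yields exactly the $o(n^{-r})$ decay. Once the tail is controlled for a fixed large $a$, letting $a\to\infty$ is unnecessary — a single sufficiently large $a$ suffices since the bounded-region term already goes to zero by Andersen et al.\ for \emph{every} $a$. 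I would therefore first invoke \cite[Theorem 7.3]{Andersen et al 1988} for the bounded part, then spend the bulk of the argument on the tidy tail estimate, and finally combine the two via the triangle inequality displayed above.
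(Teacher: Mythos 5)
There is a genuine gap, and it is in the tail term, which is where you yourself locate ``the main work.'' The paper's proof is much shorter than your plan: it transforms to uniforms ($\widehat F_n=\widehat G_n(F)$), bounds $n^r\|\widehat F_n-F\|_\phi$ by $n^r\sup_{s\in(0,1)}|\widehat G_n(s)-s|\,w(s)$ with $w(s)=\phi(\max\{F^\leftarrow(s);F^\rightarrow(s)\})$, and then invokes the \emph{weighted} statement of Theorem 7.3\,(3) in Andersen--Gin\'e--Zinn, which says this converges a.s.\ to $0$ if and only if $\int_0^1 w(s)^{1/(1-r)}ds<\infty$; a change of variables identifies that integral with the hypothesis $\int\phi^{1/(1-r)}dF<\infty$. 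You instead use only the unweighted case of that theorem on $\{|x|\le a\}$ and try to recover the weight by a truncation argument on the tails. That part of your plan cannot be repaired as stated.

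Concretely: for a fixed truncation level $a$ with $F(a)<1$, pick any $x_0>a$ with $F(x_0)<1$. Then $\sup_{x>a}(1-\widehat F_n(x))\phi(x)\ge(1-\widehat F_n(x_0))\phi(x_0)\to(1-F(x_0))\phi(x_0)>0$ a.s., so $n^r\sup_{x>a}(1-\widehat F_n(x))\phi(x)\to\infty$ for every $r>0$; your ``honest statement,'' that this supremum is $o(n^{-r})$ for a single large $a$, is false, and the same divergence afflicts the deterministic term $n^r\sup_{x>a}(1-F(x))\phi(x)$ that you already flagged. (For $r=0$ the fixed-$a$ argument does work --- that is the classical weighted Glivenko--Cantelli proof --- which is probably the source of the confusion.) Nor does letting $a=a_n\to\infty$ save the scheme: the bounded part requires $\phi(a_n)\,n^r\|\widehat F_n-F\|_\infty\to0$, and since $\|\widehat F_n-F\|_\infty$ is of exact order $\sqrt{\log\log n/n}$ by the LIL this forces $\phi(a_n)=o(n^{1/2-r})$; whereas controlling the tail via your bound $\sup_{x>a_n}(1-\widehat F_n(x))\phi(x)\le\frac1n\sum_{i=1}^n\phi(X_i)\eins_{\{X_i>a_n\}}$ leaves the mean $n^r\,\ex[\phi(X_1)\eins_{\{X_1>a_n\}}]$, and under the assumed moment condition this only tends to $0$ when $\phi(a_n)\gtrsim n^{1-r}$. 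The two requirements are incompatible for every $r\in(0,\frac12)$. The missing ingredient is sharper control of $\widehat G_n(s)-s$ near $s=0,1$ (binomial/quantile bounds at the level $s\asymp1/n$), which is exactly what the weighted Andersen--Gin\'e--Zinn theorem packages; citing it in its weighted form, as the paper does, removes the need for any truncation.
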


Let us now turn to the case of weakly dependent data. We will assume that the sequence $(X_i)$ is $\alpha$-mixing in the sense of \cite{Rosenblatt1956}, i.e.\ that the mixing coefficient $\alpha(n):=\sup_{k\ge 1}\sup_{A,B}\,|\pr[A\cap B]-\pr[A]\pr[B]|$ converges to zero as $n\to\infty$, where the second supremum ranges over all $A\in\sigma(X_1,\ldots,X_k)$ and $B\in\sigma(X_{k+n},X_{k+n+1},\ldots)$. For an overview on mixing conditions see \cite{Bradley2005,Doukhan1994}.

\begin{theorem}\label{generalized GC - dependent}
Let $(X_i)$ be a sequence of identically distributed random variables with distribution function $F$. Suppose that $(X_i)$ is $\alpha$-mixing with mixing coefficients $(\alpha(n))$. Let $r\in[0,\frac{1}{2})$ and assume that $\alpha(n)\le K n^{-\vartheta}$ for all $n\in\N$ and some constants $K>0$ and $\vartheta>2r$. Then
\begin{equation}\label{generalized GC - dependent - eq}
    n^{r}\|\widehat F_n-F\|_\infty\longrightarrow 0\qquad\pr\mbox{-a.s.}
\end{equation}
\end{theorem}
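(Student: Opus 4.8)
\medskip\noindent
The plan is to reduce the supremum to finitely many evaluations, to recognize these as normalized partial sums of bounded, centered, $\alpha$-mixing random variables, to control such sums by a sharp deviation inequality, and finally to pass from a subsequence to the full sequence by exploiting the monotonicity of $n\widehat F_n$. \emph{Reduction to partial sums.} For $m\in\N$ pick $-\infty=x_0<x_1<\cdots<x_m=+\infty$ with $F(x_j-)-F(x_{j-1})\le 1/m$ for every $j$ (possible because $F$ has range in $[0,1]$). Since $\widehat F_n$ and $F$ are nondecreasing, for $x\in[x_{j-1},x_j)$ we have $\widehat F_n(x)-F(x)\le(\widehat F_n(x_j-)-F(x_j-))+1/m$ and the matching lower bound, so
\[
  \|\widehat F_n-F\|_\infty\ \le\ \max_{1\le j\le m}\,\max\bigl\{\,|\widehat F_n(x_j)-F(x_j)|,\ |\widehat F_n(x_j-)-F(x_j-)|\,\bigr\}\ +\ \frac1m .
\]
Each evaluated difference equals $n^{-1}\sum_{i=1}^n(\eins_{A_i}-\pr[A_i])$ for a suitable event $A_i\in\{\{X_i\le x_j\},\{X_i<x_j\}\}$; the summands are $[-1,1]$-valued, centered, and, being functions of the single variable $X_i$, form an $\alpha$-mixing array with the same coefficients $(\alpha(n))$. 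Choosing $m=m_n:=\lceil n^r\log n\rceil$ makes the deterministic term $n^r/m_n$ tend to $0$, and the theorem reduces to controlling $\max_{k\le n}\bigl|\sum_{i=1}^k(\eins_{A_i}-\pr[A_i])\bigr|$ uniformly over the $m_n$ grid points.

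\emph{Where $\vartheta>2r$ enters.} Davydov's covariance inequality for bounded variables (see e.g.\ \cite{Doukhan1994}) gives $\mathrm{Var}\bigl(\sum_{i=1}^n(\eins_{A_i}-\pr[A_i])\bigr)\le C\bigl(n+n\sum_{k=1}^{n-1}\alpha(k)\bigr)$, which under $\alpha(k)\le Kk^{-\vartheta}$ is of order $n^{2-\min(\vartheta,1)}$ (with an extra $\log n$ if $\vartheta=1$). Since $r<\frac12$ and $\vartheta>2r$ one has $2-\min(\vartheta,1)<2(1-r)$, hence this variance is $o\bigl(n^{2(1-r)}\bigr)$: this is precisely where both hypotheses are used, it makes $n^r\|\widehat F_n-F\|_\infty\to0$ plausible, and the condition is essentially optimal, since for slowly mixing chains the variance really is of that order. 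The second moment alone does not suffice for an almost-sure statement, so the plan is to feed this variance bound into a Fuk--Nagaev-/Bernstein-type maximal deviation inequality for partial sums of bounded $\alpha$-mixing sequences with polynomially decaying coefficients (or directly into such an inequality for the supremum of the associated empirical process), as available in the literature on weakly dependent sequences.

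\emph{Assembling.} Fix a polynomially growing subsequence $(n_\ell)$ with $n_{\ell+1}/n_\ell\to1$ and gaps $n_{\ell+1}-n_\ell=O(n_\ell^c)$ for some $c\in(0,1)$. Applying the deviation inequality at the $m_{n_\ell}$ grid points and summing over $\ell$ (the tail can be arranged to decay with an arbitrarily large power of $n_\ell$) gives, by Borel--Cantelli, $n_\ell^{\,r}\|\widehat F_{n_\ell}-F\|_\infty\to0$ $\pr$-a.s. To pass to the full sequence, use that $n\widehat F_n(x)=\sum_{i=1}^n\eins_{\{X_i\le x\}}$ is nondecreasing in $n$: for $n_\ell\le n\le n_{\ell+1}$,
\[
  n\bigl(\widehat F_n(x)-F(x)\bigr)\ =\ n_\ell\bigl(\widehat F_{n_\ell}(x)-F(x)\bigr)\ +\ \sum_{i=n_\ell+1}^{n}\bigl(\eins_{\{X_i\le x\}}-F(x)\bigr),
\]
so that $n^r|\widehat F_n(x)-F(x)|\le (n_{\ell+1}/n_\ell)^r\,n_\ell^{\,r}\|\widehat F_{n_\ell}-F\|_\infty+n_\ell^{\,r-1}\max_{n_\ell<n\le n_{\ell+1}}\bigl|\sum_{i=n_\ell+1}^n(\eins_{\{X_i\le x\}}-F(x))\bigr|$. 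The first summand tends to $0$ $\pr$-a.s.\ by the subsequence result and $n_{\ell+1}/n_\ell\to1$, while the second is controlled, by the same maximal inequality applied to the block $X_{n_\ell+1},\dots,X_{n_{\ell+1}}$ of length $O(n_\ell^c)$ and a further Borel--Cantelli step over $\ell$, by a term of order $n_\ell^{\,r-1}\,(n_\ell^{c})^{1-\min(\vartheta,1)/2}$ up to logarithms; this tends to $0$ because $c<1$ and $\min(\vartheta,1)>2r$. Taking the supremum over $x$ via the first step completes the proof.

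\emph{Main obstacle.} The delicate point is to keep the threshold sharp. A crude union bound over the $\sim n^r$ evaluation points, or a Rosenthal inequality tied to the trivial variance order $n$, over-charges and would force $\vartheta>1$ (or cost logarithmic factors in the rate), thereby missing the regime of small $r$. One therefore needs a deviation inequality that genuinely reflects the true variance order $n^{2-\min(\vartheta,1)}$ together with an efficient --- chaining/bracketing, DKW-type --- treatment of the supremum, and one must coordinate the subsequence density, the order of the moment/tail bound, and the grid resolution $m_n$ so that the Borel--Cantelli series converges under the bare assumption $\min(\vartheta,1)>2r$. The discretization, the handling of atoms, and the monotone interpolation are, by comparison, routine.
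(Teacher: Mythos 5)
Your overall architecture is sound and, at the level of strategy, parallels the paper's: both arguments rest on (i) a second-moment bound that reflects the true variance order $n\bigl(1+\sum_{k<n}\alpha(k)\bigr)=O(n^{2-\min(\vartheta,1)})$ of the centered partial sums, (ii) Borel--Cantelli over a sparse family of block lengths, and (iii) an interpolation/chaining step to recover all $n$ and all $x$. Your identification of where $\vartheta>2r$ and $r<\tfrac12$ enter is correct. The problem is that the single ingredient on which everything hinges --- a deviation or moment inequality for the \emph{supremum} over $t$ (equivalently, over your grid of $m_n\sim n^r\log n$ points) that does not pay the union-bound price --- is never supplied. You invoke ``a Fuk--Nagaev-/Bernstein-type maximal deviation inequality \dots as available in the literature,'' and then your own closing paragraph demonstrates that the obvious candidates fail: a union bound over the grid multiplies the Chebyshev tail $Cn^{2r-\vartheta}$ by $m_n\sim n^r$ and so requires $\vartheta>3r$ just to make the probability tend to zero (take $r=0.4$, $\vartheta=0.9$: the hypothesis $\vartheta>2r$ holds but your bound diverges), while Rosenthal with the trivial variance order forces $\vartheta>1$. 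Acknowledging that one needs ``an efficient chaining/bracketing, DKW-type treatment of the supremum'' is not the same as producing one; as written, the proof has a hole exactly at its load-bearing step.

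The paper closes this hole with Proposition 7.1 of Rio's monograph, which bounds $\ex\bigl[\sup_{t\in(0,1)}Z_{p,q}(t)^2\bigr]$ by $q\bigl(1+4\sum_{i<q}\alpha(i)\bigr)(2+\log q)^2$ --- i.e.\ the supremum of the empirical process over a block of length $q$ costs only a $(\log q)^2$ factor beyond the single-point variance, not a factor of $m_n$. Combined with Markov's inequality (the paper's Lemma~\ref{probability bound}) this yields tails of order $q^{2r-\vartheta}(\log q)^2$ for the supremum itself, which is summable along dyadic blocks under the bare assumption $\vartheta>2r$. The paper then assembles the full sequence by the dyadic decomposition $Z_{0,n}\le Z_{0,2^{N_n}}+\sum_j Z_{2^{N_n}+b_j(n)2^j,\,2^{j-1}}$ with geometrically shrinking thresholds $2^{N(1-R)}2^{-\beta(N-j)}$, which plays the role of your polynomial subsequence plus monotone interpolation (your interpolation scheme could also be made to work, but only once the same supremum inequality is in hand, since --- as you note --- the trivial bound $2(n_{\ell+1}-n_\ell)$ on the block increments again forces a relation like $\vartheta>3r$). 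To repair your proof you would need to either quote Rio's Proposition 7.1 (or an equivalent bracketing bound for $\alpha$-mixing empirical processes with explicit dependence on $\sum\alpha(i)$) and verify it applies, or prove such a supremum inequality yourself; without that, the argument does not establish the theorem in the stated generality.
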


{For the proof of Theorem \ref{generalized GC - dependent}, which can be found in Section \ref{appendix a0}, we will combine arguments of \cite{Philipp1977} and \cite{Rio2000}. Under the stronger mixing conditions $\alpha(n)\le K n^{-8}$ and $\alpha(n)\le K n^{-(3+\varepsilon)}$, $\varepsilon>0$, the convergence in (\ref{generalized GC - dependent - eq}) is already known from \cite{BerkesPhilipp1977,Philipp1977} and \cite{Yoshihara1979}, respectively. If in (\ref{generalized GC - dependent - eq}) almost sure converges is replaced by convergence in probability, then the result is known from \cite{Yu1994}. The more recent article \cite{BerkesHoermannSchauer2009} contains a version of Theorem \ref{generalized GC - dependent} for empirical processes of so called $S$-mixing sequences. The concept of $S$-mixing seems to be less restrictive than the concept of $\alpha$-mixing, but the two concepts are not directly comparable.

To compare Theorem \ref{generalized GC - dependent} above with Theorem 1 in \cite{BerkesHoermannSchauer2009} anyway, let $X_t:=\sum_{s=0}^\infty a_{s}Z_{t-s}$, $t\in\N$, be a linear process with $(Z_s)_{s\in\Z}$ a sequence of i.i.d.\ random variables with expectation zero, a finite absolute $p$th moment for some $p\ge 2$, and a Lebesgue density $f$ satisfying $\int|f(x)-f(y)|\,dx\le M |x-y|$ for all $x,y\in\R$ and some finite constant $M>0$. For instance, these conditions are fulfilled when $Z_0$ is centered normal. 
If $a_s=s^{-\gamma}$ for some $\gamma>(2+p)/p$, then results in \cite{Gorodetskii1977} show that $(X_t)$ is $\alpha$-mixing with $\alpha(n)\le K\,n^{-\vartheta}$ for $\vartheta=(p(\gamma-1)-2)/(1+p)$. So, if we choose $\gamma=(3+2p)/p$, then we have $\vartheta=1$ and therefore Theorem \ref{generalized GC - dependent} yields
\begin{equation}\label{generalized GC - dependent - eq - two}
    n^{r}\|\widehat F_n-F\|_\infty\longrightarrow 0\qquad\pr\mbox{-a.s.},\qquad\forall\,r\in[0,1/2).
\end{equation}
On the other hand, in order to obtain (\ref{generalized GC - dependent - eq - two}) with the help of Theorem 1 and the considerations in Section 3.1 of \cite{BerkesHoermannSchauer2009}, one has to choose $\gamma=(A+(A+1)p)/p$ for some $A>4$. Since $(A+(A+1)p)/p>(3+2p)/p$ for every $A>4$, Theorem \ref{generalized GC - dependent} above appears to be less restrictive in the $\alpha$-mixing case than Theorem 1 in \cite{BerkesHoermannSchauer2009}. On the other hand, Theorem 1 in \cite{BerkesHoermannSchauer2009} covers even the two-parameter empirical process.

It seems to be hard to modify the arguments of the proof of Theorem \ref{generalized GC - dependent} in such a way that they can be applied to the case of a nonconstant weight function $\phi$. To the best of the author's knowledge, there is no respective results in the literature so far. Results of \cite{DedeckerMerlevede2007} cover the case where in (\ref{generalized GC - dependent - eq}) the sup-norm is replaced by the $L^p$-norm w.r.t.\ a $\sigma$-finite measure for $p>1$. However, as the case $p=1$ is excluded, the results do not cover the $L^1$-Wasserstein distance. Notice that several statistical functionals can be shown to be continuous w.r.t.\ the $L^1$-Wasserstein distance.

If one is content with $r=0$, i.e.\ with an ordinary SLLN, then the following Theorem \ref{generalized GC - dependent - ordinary} gives a respective result for nonconstant weight functions $\phi$ and $\alpha$-mixing data. The proof of Theorem \ref{generalized GC - dependent - ordinary} can be found in Section \ref{appendix a2}. To the best of the author's knowledge, Theorem \ref{generalized GC - dependent - ordinary} provides the first result on the strong convergence of the empirical distribution function $\widehat F_n$ of $\alpha$- mixing random variables to the underlying distribution function $F$ w.r.t.\ a {\em nonuniform} sup-norm. For any nonincreasing function $h:\R_+\to[0,1]$, we let $h^\rightarrow(y):=\sup\{x\in\R_+:h(x)>y\}$, $y\in[0,1]$, be its right-continuous inverse, with the convention $\sup\emptyset:=0$.

\begin{theorem}\label{generalized GC - dependent - ordinary}
Let $(X_i)$ be a sequence of identically distributed random variables with distribution function $F$. Let $\phi$ be a weight function, and suppose that  $\int_{-\infty}^\infty\phi\,dF<\infty$. Suppose that $(X_i)$ is $\alpha$-mixing with mixing coefficients $(\alpha(n))$, let $\alpha(t):=\alpha(\lfloor t\rfloor)$ be the \cadlag\ extension of $\alpha(\cdot)$ from $\N$ to $\R_+$, and assume that
\begin{equation}\label{generalized GC - dependent - ordinary - cond}
    \int_0^1 \log\big(1+\alpha^\rightarrow(s/2)\big)\,\overline{G}\,^\rightarrow(s)\,ds\,<\,\infty
\end{equation}
for $\overline G:=1-G$, where $G$ denotes the distribution function of $\phi(X_1)$. Then
\begin{equation}\label{generalized GC - dependent - ordinary - eq}
    \|\widehat F_n-F\|_\phi\longrightarrow 0\qquad\pr\mbox{-a.s.}
\end{equation}
\end{theorem}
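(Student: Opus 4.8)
The plan is to split $\R$ into a compact window and its two tails, to treat the three resulting suprema separately, and to reduce each of them to an almost sure law of large numbers for partial sums $\frac1n\sum_{i=1}^n g(X_i)$ of the $\alpha$-mixing (merely identically distributed) sequence $(X_i)$. The single analytic tool behind all three reductions is a strong law of large numbers of Rio type: if $(Y_i)$ is identically distributed, nonnegative and $\alpha$-mixing with coefficients $(\alpha(n))$, if $\E[Y_1]<\infty$, and if $\int_0^1\log(1+\alpha^\rightarrow(s/2))\,Q_{Y_1}(s)\,ds<\infty$ with $Q_{Y_1}(s):=\inf\{x\in\R:\pr[Y_1>x]\le s\}$, then $\frac1n\sum_{i=1}^n(Y_i-\E[Y_i])\to0$ $\pr$-a.s. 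Up to the elementary identity $\sum_{n\ge1}\frac1n\int_0^{2\alpha(n)}Q_{Y_1}(u)\,du\asymp\int_0^1\log(1+\alpha^\rightarrow(s/2))\,Q_{Y_1}(s)\,ds$ (valid since $\alpha(\cdot)$ is nonincreasing, so $\sum_{n:\,2\alpha(n)>u}\frac1n\asymp\log(1+\alpha^\rightarrow(u/2))$), this is essentially the form in which the strong law appears in \cite{Rio2000}; the factor $\frac12$ is precisely the one carried by Rio's covariance inequality $|\mathrm{Cov}(X,Y)|\le 2\int_0^{2\alpha}Q_{|X|}(u)Q_{|Y|}(u)\,du$. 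We use freely that $(g(X_i))_i$ inherits the $\alpha$-mixing coefficients of $(X_i)_i$, that $\overline{G}^\rightarrow$ is exactly the quantile function of $\phi(X_1)$ --- so $\overline{G}^\rightarrow(s)\ge\varepsilon$ for $s\in(0,1)$, where $\phi\ge\varepsilon>0$, and $\int_0^1\overline{G}^\rightarrow(s)\,ds=\int\phi\,dF<\infty$ --- and the u-shape of $\phi$ around $x_\phi$.

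\emph{Compact window.} By the classical Glivenko--Cantelli argument, whose only probabilistic input is the pointwise convergence $\widehat F_n(t)\to F(t)$ and $\widehat F_n(t^-)\to F(t^-)$ along a countable dense set of $t$ (the rest being a deterministic sandwich using monotonicity of $F$ and $\widehat F_n$), one gets $\|\widehat F_n-F\|_\infty\to0$ $\pr$-a.s.: indeed the required pointwise laws follow from the Rio-type SLLN applied to the bounded identically distributed sequences $(\eins_{\{X_i\le t\}})_i$ and $(\eins_{\{X_i<t\}})_i$, for which $Q_{Y_1}\le1$, so the integrability hypothesis reduces to $\int_0^1\log(1+\alpha^\rightarrow(s/2))\,ds<\infty$ and this is a constant multiple of (\ref{generalized GC - dependent - ordinary - cond}) because $\overline{G}^\rightarrow\ge\varepsilon$. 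Hence, for every $a$ with $-a\le x_\phi\le a$, $\sup_{x\in[-a,a]}|\widehat F_n(x)-F(x)|\,\phi(x)\le(\sup_{[-a,a]}\phi)\,\|\widehat F_n-F\|_\infty\to0$ $\pr$-a.s.

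\emph{Tails and conclusion.} Fix such an $a\in\N$. For $x\le-a\ (\le x_\phi)$ the function $\phi$ is nonincreasing on $(-\infty,x_\phi]$, hence $\{X_i\le x\}\subseteq\{\phi(X_i)\ge\phi(x)\}$, so $\widehat F_n(x)\,\phi(x)\le\frac1n\sum_{i=1}^n\eins_{\{X_i\le x\}}\phi(X_i)\le\frac1n\sum_{i=1}^n\phi(X_i)\eins_{\{X_i\le-a\}}=:S_n^-(a)$ and $F(x)\,\phi(x)\le\E[\phi(X_1)\eins_{\{X_1\le-a\}}]=:\delta^-(a)$. The summands $Y_i:=\phi(X_i)\eins_{\{X_i\le-a\}}$ are nonnegative, identically distributed, $\alpha$-mixing with the same coefficients, have $\E[Y_1]=\delta^-(a)\le\int\phi\,dF<\infty$, and satisfy $\pr[Y_1>x]\le\pr[\phi(X_1)>x]$ for all $x$, hence $Q_{Y_1}\le\overline{G}^\rightarrow$ pointwise; the Rio-type SLLN therefore applies (its hypothesis being implied by (\ref{generalized GC - dependent - ordinary - cond})) and gives $S_n^-(a)\to\delta^-(a)$ $\pr$-a.s., whence $\limsup_n\sup_{x\le-a}|\widehat F_n(x)-F(x)|\,\phi(x)\le2\delta^-(a)$ $\pr$-a.s. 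Symmetrically, using that $\phi$ is nondecreasing on $[x_\phi,\infty)$, $\limsup_n\sup_{x\ge a}|\widehat F_n(x)-F(x)|\,\phi(x)\le2\delta^+(a)$ $\pr$-a.s.\ with $\delta^+(a):=\E[\phi(X_1)\eins_{\{X_1>a\}}]$. Collecting the countably many exceptional null sets over $a\in\N$, we obtain $\limsup_n\|\widehat F_n-F\|_\phi\le2(\delta^-(a)+\delta^+(a))$ $\pr$-a.s.\ for every $a\in\N$; since $\int\phi\,dF<\infty$, dominated convergence gives $\delta^-(a),\delta^+(a)\to0$ as $a\to\infty$, which yields (\ref{generalized GC - dependent - ordinary - eq}).

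The substantive step, and the one I expect to be the main obstacle, is the $\alpha$-mixing strong law used above in exactly the generality required: possibly unbounded and merely identically distributed (not necessarily stationary) summands, under the sharp condition (\ref{generalized GC - dependent - ordinary - cond}), the innocuous-looking factor $\frac12$ forcing one to keep track of the ``$2\alpha$'' throughout. If it cannot be quoted verbatim from \cite{Rio2000}, I would prove it along the lines underlying Rio's dependent Marcinkiewicz--Zygmund laws: dyadic blocking of $\{1,\ldots,n\}$ into blocks of logarithmically growing length, decoupling the block sums through a Berbee/Bradley-type coupling at a cost governed by the mixing coefficients, truncating the $Y_i$ at a level tuned to the block length, and controlling the resulting sums via Rio's maximal inequality and the Borel--Cantelli lemma along $n_k=2^k$ (monotonicity filling the gaps between consecutive indices). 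The logarithm in (\ref{generalized GC - dependent - ordinary - cond}) is precisely what makes the Borel--Cantelli series summable.
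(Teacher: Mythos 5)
Your proof is correct, but it follows a genuinely different route from the paper's. The paper first transfers everything to $U[0,1]$ variables via the quantile transformation, then runs the Glivenko--Cantelli-by-bracketing argument of \cite[Theorem 2.4.1]{van der Vaart Wellner 1996}: it covers the class $\{w_s(\cdot)=w(s)\eins_{[0,s]}(\cdot)\}$ with finitely many $\varepsilon$-brackets in $L^1(d\I)$ (built from the integrability of $w=\phi\circ F^\leftarrow$ as in \cite[Example 19.12]{van der Vaart 1998}) and applies Rio's SLLN to each bracket endpoint. You instead work directly on the real line, splitting into a compact window --- handled by the unweighted Glivenko--Cantelli theorem, itself obtained from the same SLLN applied to indicators --- and two tails, handled by the monotone domination $\sup_{x\le -a}|\widehat F_n(x)-F(x)|\phi(x)\le \frac1n\sum_i\phi(X_i)\eins_{\{X_i\le-a\}}+\delta^-(a)$ and its mirror image, followed by $a\to\infty$. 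The two decompositions are cousins (your tail bound is in effect a crude one-sided bracket), and both hinge on exactly the same external input under exactly condition (\ref{generalized GC - dependent - ordinary - cond}); note that the strong law you flag as the ``main obstacle'' need not be reproved: it is precisely \cite[Theorem 1\,(ii)]{Rio1995}, quoted in the paper as Theorem \ref{Rios SLLN}, valid for identically distributed (not necessarily stationary) $\alpha$-mixing sequences, and your verifications that $Q_{Y_1}\le\overline G^{\,\rightarrow}$ for the truncated summands and that $\overline G^{\,\rightarrow}\ge\varepsilon$ disposes of the bounded indicator case are exactly what is needed to invoke it. Your version is arguably more elementary (no quantile transformation, no explicit bracket construction); the paper's bracketing formulation is more systematic and extends more readily to other weighted function classes.
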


\begin{remarknorm}\label{generalized GC - dependent - ordinary - remark}
Notice that (\ref{generalized GC - dependent - ordinary - cond}) holds in particular if $\ex[\phi(X_1)\log^+\phi(X_1)]<\infty$ and $\alpha(n)={\cal O}(n^{-\vartheta})$ for some arbitrarily small $\vartheta>0$; cf.\ \cite[Application 5, p.\,924]{Rio1995}.
{\hspace*{\fill}$\Diamond$\par\bigskip}
\end{remarknorm}


\section{Strong laws for $T(\widehat F_n)$ for particular functionals $T$}\label{Strong laws for widehat T F n}

In this section we will show that several statistical functionals $T$ are continuous w.r.t.\ nonuniform sup-norms $\|\cdot\|_\phi$ or w.r.t.\ the uniform sup-norm $\|\cdot\|_\infty$. As a consequence we will obtain MZ-SLLNs and SLLNs for $T(\widehat F_n)$, cf.\ the discussion in the Introduction.


\subsection{L-functionals}\label{l statistics}

Let $K$ be the distribution function of a probability measure on $([0,1],{\cal B}([0,1]))$, and $\F_K$ be the class of all distribution function $F$ on the real line for which $\int_{-\infty}^\infty|x|\,dK(F(x))<\infty$. The functional ${\cal L}$, defined by
\begin{equation}\label{def L functional}
    {\cal L}(F):={\cal L}_K(F):=\int_{-\infty}^\infty x\,dK(F(x)),\qquad F\in \F_K,
\end{equation}
is called L-functional associated with $K$. It was shown in \cite{BeutnerZaehle2010} that if $K$ is continuous and piecewise differentiable, the (piecewise) derivative $K'$ is bounded above and $F\in\F_K$ takes the value $d\in(0,1)$ at most once if $K$ is not differentiable at $d$, then for every $\lambda>1$ the functional ${\cal L}:\F_K\to\R$ is quasi-Hadamard differentiable at $F$ tangentially to $\D_{\phi_\lambda}$, where $\phi_\lambda(x):=(1+|x|)^\lambda$. This implies in particular that ${\cal L}$ is also H\"older-$1$ continuous at $F$ w.r.t.\ $(\|\cdot\|_{\phi_\lambda},|\cdot|)$ and $\widehat\F$. The assumption that $K'$ be bounded can be relaxed at the cost of a more sophisticated choice of the weight function $\phi$\,; cf.\ the following Lemma \ref{HD of L functional}. In the lemma we will assume without loss of generality that $F(x)\in(0,1)$ for all $x\in\R$. If $F$ reaches $0$ or $1$, then the weight function $\phi_{\gamma,F}$, defined in (\ref{HD of L functional - def weight fct}) below, can be modified in the obvious way.

\begin{lemma}\label{HD of L functional}
Let $F\in\F_K$, $\oF:=1-F$, $0\le \beta'<\gamma\le 1$, and assume that
\begin{itemize}
    \item[(a)] $K$ is locally Lipschitz continuous at $x$ with local Lipschitz constant $L(x)>0$ for all $x\in(0,1)$,and $L(x)\le C'x^{-\beta'}(1-x)^{-\beta'}$ for all $x\in(0,1)$ and some constant $C'>0$.
    \item[(b)] $\int_{-\infty}^0 F(x)^{\gamma-\beta'}dx+\int_0^{\infty}\oF(x)^{\gamma-\beta'}dx<\infty$.
\end{itemize}
Assume $F(x)\in(0,1)$ for all $x\in\R$, and define the weight function
\begin{equation}\label{HD of L functional - def weight fct}
    \phi_{\gamma,F}(x):=F(x)^{-\gamma}\eins_{(-\infty,0)}(x)+{\oF}(x)^{-\gamma}\eins_{[0,\infty)}(x),\qquad x\in\R.
\end{equation}
Then the functional ${\cal L}:\F_K\to\R$ is H\"older-$1$ continuous at $F$ w.r.t.\ $(\|\cdot\|_{\phi_{\gamma,F}},|\cdot|)$ and $\widehat\F$.
\end{lemma}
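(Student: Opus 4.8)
The plan is to estimate directly the difference $|\mathcal{L}(F_n)-\mathcal{L}(F)|$ for a sequence $(F_n)\subset\widehat\F$ with $\|F_n-F\|_{\phi_{\gamma,F}}\to 0$, and to show it is $\mathcal{O}(\|F_n-F\|_{\phi_{\gamma,F}})$. First I would rewrite
$$
\mathcal{L}(F_n)-\mathcal{L}(F)=\int_{-\infty}^\infty x\,d\big(K(F_n(x))-K(F(x))\big),
$$
and apply integration by parts (valid for empirical $F_n$, whose range lies in a nice subclass, so the boundary terms vanish under the moment assumption (b)) to obtain
$$
\mathcal{L}(F_n)-\mathcal{L}(F)=-\int_{-\infty}^\infty\big(K(F_n(x))-K(F(x))\big)\,dx.
$$
Then, on the set where $K$ is locally Lipschitz, the mean-value-type bound together with assumption (a) gives $|K(F_n(x))-K(F(x))|\le L(\xi_x)\,|F_n(x)-F(x)|$ for some intermediate value $\xi_x$ between $F(x)$ and $F_n(x)$, and I would bound $L(\xi_x)\le C'\,\xi_x^{-\beta'}(1-\xi_x)^{-\beta'}$.

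The key step is to control $\xi_x^{-\beta'}(1-\xi_x)^{-\beta'}$ by a constant multiple of $F(x)^{-\beta'}\oF(x)^{-\beta'}$, uniformly in $n$ large. For $x$ in the left tail, $F_n(x)$ is eventually comparable to $F(x)$ because $|F_n(x)-F(x)|\le\|F_n-F\|_{\phi_{\gamma,F}}\,F(x)^{\gamma}\le\varepsilon\,F(x)^{\gamma}$ with $\gamma\le 1$, so $F_n(x)\ge F(x)-\varepsilon F(x)^{\gamma}$; since $\gamma>\beta'$ one extracts a factor and gets $\xi_x^{-\beta'}\le c\,F(x)^{-\beta'}$ once $\|F_n-F\|_{\phi_{\gamma,F}}$ is small, while $(1-\xi_x)^{-\beta'}$ stays bounded. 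The analogous argument on the right tail handles $(1-\xi_x)^{-\beta'}\le c\,\oF(x)^{-\beta'}$. On a compact middle range $x$ stays away from the tails and everything is bounded. Combining, for all $n$ large,
$$
|K(F_n(x))-K(F(x))|\le c\,C'\,F(x)^{-\beta'}\oF(x)^{-\beta'}\,|F_n(x)-F(x)|.
$$

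Finally I would split the outer integral at $0$, use $|F_n(x)-F(x)|\le\|F_n-F\|_{\phi_{\gamma,F}}\,F(x)^{\gamma}$ on $(-\infty,0)$ and the $\oF$-analogue on $[0,\infty)$, to get
$$
|\mathcal{L}(F_n)-\mathcal{L}(F)|\le c\,C'\,\|F_n-F\|_{\phi_{\gamma,F}}\left(\int_{-\infty}^0 F(x)^{\gamma-\beta'}\,dx+\int_0^\infty\oF(x)^{\gamma-\beta'}\,dx\right),
$$
and the bracketed quantity is finite by assumption (b). This yields H\"older-$1$ continuity. The main obstacle is the middle, compact-range argument together with making the tail comparison $\xi_x\asymp F(x)$ genuinely uniform in $n$: one must be careful that the threshold on $\|F_n-F\|_{\phi_{\gamma,F}}$ below which the comparison holds does not depend on $x$, which is exactly where $\gamma>\beta'$ (strict) is used, and one should also verify the integration-by-parts step is legitimate for the left-continuous/\cadlag\ empirical objects in $\widehat\F$ (handling the countably many jumps and checking the boundary terms $x\,(K(F_n(x))-K(F(x)))\to 0$ as $x\to\pm\infty$, which again follows from (b)).
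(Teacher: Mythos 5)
Your overall route coincides with the paper's: represent ${\cal L}(G)=-\int_{-\infty}^0K(G(x))\,dx+\int_0^\infty(1-K(G(x)))\,dx$, bound $|K(F_n(x))-K(F(x))|$ pointwise by a local Lipschitz constant times $|F_n(x)-F(x)|$, insert $|F_n(x)-F(x)|\le\|F_n-F\|_{\phi_{\gamma,F}}\,\phi_{\gamma,F}(x)^{-1}$, and use (b) to see that the resulting $dx$-integral is finite (on $x\le 0$ the factor $\oF(x)^{-\beta'}$ is bounded, and symmetrically on the right). The paper does exactly this in three lines, applying the Lipschitz constant at the point $F(x)$ itself, i.e.\ $|K(F_n(x))-K(F(x))|\le L(F(x))\,|F_n(x)-F(x)|$.

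The one place where you genuinely deviate is the step you yourself flag as the main obstacle, and as stated it fails. You take an intermediate point $\xi_x$ between $F_n(x)$ and $F(x)$ and claim $\xi_x^{-\beta'}\le c\,F(x)^{-\beta'}$ uniformly in $x$ once $\varepsilon:=\|F_n-F\|_{\phi_{\gamma,F}}$ is small, arguing from $F_n(x)\ge F(x)-\varepsilon F(x)^\gamma$. For $\gamma=1$ this works, but for $\gamma<1$ the perturbation $\varepsilon F(x)^\gamma$ is \emph{not} small relative to $F(x)$ in the far left tail: whenever $F(x)\le\varepsilon^{1/(1-\gamma)}$ the lower bound is nonpositive, and indeed $F_n(x)=0$ for all $x$ to the left of the smallest observation while $F(x)>0$ there. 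On that region $\xi_x$ can be arbitrarily close to $0$, so no constant $c$ exists, however small $\varepsilon$ is; and the strict inequality $\gamma>\beta'$ does not help here (its role is to make the integral in (b) a sensible condition). The gap is repairable: the comparison can only fail where $F_n(x)<F(x)/2$, which forces $F(x)^{1-\gamma}\le2\varepsilon$, and there one bounds $|K(F_n(x))-K(F(x))|\le K(F(x))-K(0)\le\int_0^{F(x)}C't^{-\beta'}(1-t)^{-\beta'}\,dt\le C''F(x)^{1-\beta'}=C''F(x)^{1-\gamma}F(x)^{\gamma-\beta'}\le 2C''\varepsilon\,F(x)^{\gamma-\beta'}$, which integrates to ${\cal O}(\varepsilon)$ by (b); note that this uses $\gamma\le1$, not $\gamma>\beta'$. (Alternatively one can follow the paper and evaluate the local Lipschitz constant at $F(x)$ directly, though that step also tacitly applies the bound to a point $F_n(x)$ that need not be close to $F(x)$, so your instinct that something must be checked here is sound.) Your remarks on integration by parts and the vanishing boundary terms are fine and reproduce the paper's starting identity.
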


\begin{proof}
Since ${\cal L}(F)$ can be written as ${\cal L}(F)=-\int_{-\infty}^0 K(F(x))\,dx+\int_0^\infty(1-K(F(x)))\,dx$, we obtain by assumption (a)
\begin{eqnarray*}\label{Hadamard differentiability - proof - 1}
    |{\cal L}(F_n)-{\cal L}(F)|
    & \le & \int_{-\infty}^\infty\big|K(F_n(x))-K(F(x))\big|\,dx\\
    & \le & \int_{-\infty}^\infty L(F(x))\,\big|(F_n-F)(x)\big|\,dx\\
    & \le & \Big(C'\int_{-\infty}^\infty F(x)^{-\beta'}\oF(x)^{-\beta'}\,\phi_{\gamma,F}(x)^{-1}\,dx\Big)\|F_n-F\|_{\phi_{\gamma,F}}
\end{eqnarray*}
for every sequence $(F_n)\subset\widehat\F$; notice that $\|F_n-F\|_{\phi_{\gamma,F}}$ is finite because of $\gamma\le 1$. Since the latter integral is finite by assumption (b), we obtain $|{\cal L}(F_n)-{\cal L}(F)|={\cal O}(\|F_n-F\|_{\phi_{\gamma,F}})$ when $\|F_n-F\|_{\phi_{\gamma,F}}\to 0$.
\end{proof}

\begin{remarknorm}\label{example for df K}
Assumption (a) in Lemma \ref{HD of L functional} is fulfilled for every continuous {\it convex} distribution function $K$ on the unit interval satisfying $1-K(x)\le C(1-x)^\beta$ (for all $x\in[0,1]$ and some $C>0$) with $\beta=1-\beta'$ and $0\le \beta'<1$. In this case we can choose $L(x)=C(1-x)^{-\beta'}$ and $C'=C$. For instance, $K(x):=1-(1-x)^\beta$ provides such a distribution function when $0<\beta\le 1$.
{\hspace*{\fill}$\Diamond$\par\bigskip}
\end{remarknorm}

\begin{remarknorm}
Lemma \ref{HD of L functional} shows that the functional ${\cal L}$ is H\"older-$1$ continuous at $F$ when $K$ is locally Lipschitz continuous on $(0,1)$ and at least H\"older continuous (of a certain order) at $0$ and $1$. If the kernel $K$ is only {\em piecewise} H\"older-$\beta$ continuous on $[0,1]$ for some $\beta\in(0,1)$, and $F\in\F_K$ satisfies $\|F-\eins_{[0,\infty)}\|_{\phi_\gamma}<\infty$ for some $\gamma>1/\beta$, then one can derive at least H\"older-$\beta$ continuity of ${\cal L}$ at $F$ w.r.t.\ $(\|\cdot\|_{\phi_{\lambda}},|\cdot|)$ and $\widehat\F$; cf.\ \cite[Theorem 2]{Zaehle2010}.
{\hspace*{\fill}$\Diamond$\par\bigskip}
\end{remarknorm}

\begin{theorem}\label{SLLN L abh - eq}
Let $X_1,X_2,\ldots$ be identically distributed random variables with distribution function $F\in\F_K$. Let $0\le \beta'<\gamma\le 1$, and assume that conditions (a)--(b) of Lemma \ref{HD of L functional} hold.
\begin{itemize}
    \item[(i)] If the $X_i$ are independent and $F$ satisfies the assumptions of Theorem \ref{generalized GC} for $r\in[0,\frac{1}{2})$ and $\phi=\phi_{\gamma,F}$ defined in (\ref{HD of L functional - def weight fct}), then we have $n^r|{\cal L}(\widehat F_n)-{\cal L}(F)|\to 0$ $\pr$-a.s.
    \item[(ii)] If the sequence $(X_i)$ is $\alpha$-mixing and satisfies the assumptions of Theorem \ref{generalized GC - dependent - ordinary} for $\phi=\phi_{\gamma,F}$ defined in (\ref{HD of L functional - def weight fct}), then we have at least $|{\cal L}(\widehat F_n)-{\cal L}(F)|\to 0$ $\pr$-a.s.
\end{itemize}
\end{theorem}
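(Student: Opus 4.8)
The proof I have in mind is a one-step application of the transfer principle spelled out in the Introduction, now that the analytic input has been isolated in Lemma \ref{HD of L functional}. Under assumptions (a)--(b), Lemma \ref{HD of L functional} says precisely that ${\cal L}:\F_K\to\R$ is H\"older-$1$ continuous at $F$ w.r.t.\ $(\|\cdot\|_{\phi_{\gamma,F}},|\cdot|)$ and $\widehat\F$, i.e.\ (\ref{def eq for QC - def}) holds with $\V=\D_{\phi_{\gamma,F}}$, $\|\cdot\|_{\V'}=|\cdot|$ and $\beta=1$; moreover the inequality displayed in its proof even gives $|{\cal L}(G)-{\cal L}(F)|\le C\,\|G-F\|_{\phi_{\gamma,F}}$ for \emph{every} $G\in\widehat\F$ with one fixed finite constant $C=C(F,K,\beta',\gamma)$. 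Before combining this with a strong law for $\widehat F_n$ I would record two bookkeeping facts: (i) each element of $\widehat\F$ is a compactly supported step function, hence lies in $\F_K$, so ${\cal L}(\widehat F_n)$ is well defined; and (ii) ${\cal L}(\widehat F_n)=\sum_{i=1}^n\big(K(i/n)-K((i-1)/n)\big)X_{i:n}$, with $X_{1:n}\le\cdots\le X_{n:n}$ the order statistics of $X_1,\dots,X_n$, is a measurable function of $X_1,\dots,X_n$, so that $|{\cal L}(\widehat F_n)-{\cal L}(F)|$ is ${\cal F}$-measurable as required in (\ref{MZ-LLN-Introduction}).

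For part (i) I would first note that $\phi_{\gamma,F}$ from (\ref{HD of L functional - def weight fct}) qualifies as a weight function in the sense of Section \ref{Strong laws for widehat F n}: it is $\ge1$ everywhere (because $F,\oF\le1$ and $\gamma>0$) and u-shaped about the split point ($F^{-\gamma}$ is nonincreasing, $\oF^{-\gamma}$ nondecreasing), the only point needing care being continuity at the split point, which can be arranged by replacing $\phi_{\gamma,F}$ on a bounded neighbourhood of that point by a continuous u-shaped function squeezed between it and a fixed constant multiple of it, changing none of the relevant integrals by more than a finite amount. The hypothesis of (i) is that $F$ satisfies the assumption of Theorem \ref{generalized GC} for the given $r\in[0,1/2)$ and $\phi=\phi_{\gamma,F}$, i.e.\ $\int\phi_{\gamma,F}^{1/(1-r)}\,dF<\infty$; hence Theorem \ref{generalized GC} yields $n^r\|\widehat F_n-F\|_{\phi_{\gamma,F}}\to 0$ $\pr$-a.s., which is (\ref{MZ LLN for theta n}) with this $\V$. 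Multiplying the pointwise bound from Lemma \ref{HD of L functional} by $n^r$ gives $n^r|{\cal L}(\widehat F_n)-{\cal L}(F)|\le C\,n^r\|\widehat F_n-F\|_{\phi_{\gamma,F}}$ on all of $\Omega$, and the right-hand side tends to $0$ $\pr$-a.s.; this is the conclusion of (i), matching the recipe $r'=r\beta=r$ from the Introduction.

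Part (ii) is the same argument with the strong law for $\widehat F_n$ supplied instead by Theorem \ref{generalized GC - dependent - ordinary}: by hypothesis the $\alpha$-mixing sequence $(X_i)$ and the weight function $\phi=\phi_{\gamma,F}$ satisfy the assumptions of that theorem (in particular $\int\phi_{\gamma,F}\,dF<\infty$ and the mixing condition (\ref{generalized GC - dependent - ordinary - cond})), so $\|\widehat F_n-F\|_{\phi_{\gamma,F}}\to 0$ $\pr$-a.s.; combined with the pointwise bound $|{\cal L}(\widehat F_n)-{\cal L}(F)|\le C\,\|\widehat F_n-F\|_{\phi_{\gamma,F}}$ this gives $|{\cal L}(\widehat F_n)-{\cal L}(F)|\to 0$ $\pr$-a.s. (H\"older-$1$ continuity trivially implies the plain continuity (\ref{def eq for QC - def - ordinary}) needed here, which is why one only obtains the ordinary SLLN, as stated.)

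I do not anticipate a real obstacle: all the substance sits in Lemma \ref{HD of L functional} and in Theorems \ref{generalized GC} and \ref{generalized GC - dependent - ordinary}, and the remaining steps are the two well-posedness remarks above together with the verification that $\phi_{\gamma,F}$ meets the (mild) definition of a weight function. The only things deserving attention are that last point and the measurability of $\|\widehat F_n-F\|_{\phi_{\gamma,F}}$ presupposed when invoking the two theorems; the latter holds because the supremum defining $\|\cdot\|_{\phi_{\gamma,F}}$ may be restricted to the countably many jump points of $\widehat F_n$ together with a fixed countable dense subset of $\R$.
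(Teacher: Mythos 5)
Your proposal is correct and follows exactly the route the paper takes: the theorem is obtained by combining the H\"older-$1$ continuity from Lemma \ref{HD of L functional} with the strong laws of Theorems \ref{generalized GC} and \ref{generalized GC - dependent - ordinary} via the transfer principle of the Introduction. Your additional bookkeeping (measurability, well-definedness of ${\cal L}(\widehat F_n)$, and the continuity of $\phi_{\gamma,F}$ at the split point) only makes explicit details the paper leaves implicit.
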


In view of Lemma \ref{HD of L functional} and the discussion in the Introduction, assertions (i) and (ii) in Theorem \ref{SLLN L abh - eq} are immediate consequences of Theorems \ref{generalized GC} and \ref{generalized GC - dependent - ordinary}, respectively. Example \ref{example for l functional} below sheds light on the assumptions of Theorem \ref{SLLN L abh - eq}. Part (i) of Theorem \ref{SLLN L abh - eq} recovers results from \cite{Bose1998,Helmers1977a,Wellner1977b,Zaehle2010}. Ordinary SLLNs for L-statistics in the fashion of part (ii) of Theorem \ref{SLLN L abh - eq} can be found in \cite{van Zwet 1980} for i.i.d.\ data, in \cite{Baklanov2006} for $\phi$-mixing data, and in \cite{Aaronson et al 1996,Baklanov2006,GilatHelmers1997} for ergodic stationary data. In the case of $\alpha$-mixing data the conditions in \cite{Baklanov2006,GilatHelmers1997} are comparable to those of part (ii) in Theorem \ref{SLLN L abh - eq}. That is, the statements of Theorem \ref{SLLN L abh - eq} are basically already known. Nevertheless our approach leads to simple proofs  once Theorems \ref{generalized GC} and \ref{generalized GC - dependent - ordinary} are established. In the context of general law-invariant risk measures, in Section \ref{risk measures} below, we will also take advantage of the method of proof of Theorem \ref{SLLN L abh - eq}.

\begin{examplenorm}\label{example for l functional}
Let $0\le \beta'<\gamma\le 1$, and assume that condition (a) in Lemma \ref{HD of L functional} holds. Further assume that $F(x)=c_1|x|^{-\alpha}$ for all $x\le -x_0$, and $\oF(x)=c_2 x^{-\alpha}$ for all $x\ge x_0$, for some constants $\alpha,x_0,c_1,c_2>0$. In this case, assumption (b) in Lemma \ref{HD of L functional} and the integrability condition in Theorem \ref{generalized GC} (with $\phi=\phi_{\gamma,F}$) read as
\begin{equation}\label{example for l functional - eq 1}
    \int_{-\infty}^{-1}|x|^{-\alpha(\gamma-\beta')}\,dx+\int_1^\infty x^{-\alpha(\gamma-\beta')}\,dx\,<\,\infty
\end{equation}
and
\begin{equation}\label{example for l functional - eq 2}
    \int_{-\infty}^{-1}|x|^{\frac{\alpha\gamma}{1-r}-\alpha-1}\,dx+\int_1^\infty x^{\frac{\alpha\gamma}{1-r}-\alpha-1}\,dx\,<\,\infty,
\end{equation}
respectively. Condition (\ref{example for l functional - eq 1}) holds if and only if $\gamma>\beta'+\frac{1}{\alpha}$, and condition (\ref{example for l functional - eq 2}) holds if and only if $\gamma<1-r$. So, if we assume $0\le\beta'+\frac{1}{\alpha}<1-r$ and $0\le r<\frac{1}{2}$, then the assumptions on $K$ and $F$ imposed in the setting of part (i) of Theorem \ref{SLLN L abh - eq} are fulfilled (with any $\gamma\in(\beta'+\frac{1}{\alpha},1-r)$). In particular, if we assume $0\le\beta'+\frac{1}{\alpha}<1$, then the assumptions on $K$ and $F$ imposed in the setting of part (ii) of Theorem \ref{SLLN L abh - eq} are fulfilled (with any $\gamma\in(\beta'+\frac{1}{\alpha},1)$).
{\hspace*{\fill}$\Diamond$\par\bigskip}
\end{examplenorm}

In the following theorem we restrict ourselves to empirical quantile estimators based on $\alpha$-mixing data. However, it can easily be extended to plug-in estimators of more general L-functionals ${\cal L}_K$ with $dK$ having compact support strictly within $(0,1)$. Under the stronger mixing conditions $\alpha(n)\le K e^{-\varepsilon n}$, $\varepsilon>0$, and $\alpha(n)\le K n^{-8}$ the result of Theorem \ref{lambda - regularity of rho} is basically already known from \cite{BabuSingh1978} and \cite{Wendler2012}, respectively. We let $H^{\leftarrow}(x):=\inf\{y\in\R\,:\,H(y)\ge x\}$, $x\in\R$, denote the left-continuous inverse of any nondecreasing function $H:\R\to\R$, with the convention $\inf\emptyset:=\infty$.

\begin{theorem}\label{lambda - regularity of rho}
Let $(X_i)$ be an $\alpha$-mixing sequence of identically distributed random variables with distribution function $F$. Let $r\in[0,\frac{1}{2})$, and assume that the mixing coefficients satisfy $\alpha(n)\le K n^{-\vartheta}$ for all $n\in\N$ and some constants $K>0$, $\vartheta>2r$. Let $y\in(0,1)$, and assume that $F$ is differentiable at $F^\leftarrow(y)$ with $F'(F^\leftarrow(y))>0$.  Then, $n^r|\widehat F_n^\leftarrow(y)-F^\leftarrow(y)|\to 0$ $\pr$-a.s.
\end{theorem}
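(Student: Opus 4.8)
The plan is to deduce the statement from the generalized Glivenko--Cantelli theorem for $\alpha$-mixing data, Theorem~\ref{generalized GC - dependent}, together with a local Lipschitz estimate for the quantile functional $T(G):=G^\leftarrow(y)$ at $F$ in the uniform sup-norm; the conclusion is then an instance of the general device from the Introduction, namely that (\ref{MZ LLN for theta n}) together with (\ref{def eq for QC - def}) yields (\ref{MZ-LLN-Introduction}) with $r'=r\beta$. Here one takes $\|\cdot\|_\V=\|\cdot\|_\infty$, $\|\cdot\|_{\V'}=|\cdot|$, $\beta=1$ and $T(G):=G^\leftarrow(y)$. (Measurability of $|\widehat F_n^\leftarrow(y)-F^\leftarrow(y)|$ is automatic, since $\widehat F_n^\leftarrow(y)=X_{(\lceil ny\rceil)}$, the $\lceil ny\rceil$-th order statistic of $X_1,\dots,X_n$, and $1\le\lceil ny\rceil\le n$ because $y\in(0,1)$.)

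The analytic heart of the argument is the following elementary claim. Put $q:=F^\leftarrow(y)$ and $f:=F'(q)>0$. Then there are $\delta>0$ and $C>0$ such that every distribution function $G$ with $\|G-F\|_\infty\le\delta$ satisfies $|G^\leftarrow(y)-q|\le C\,\|G-F\|_\infty$. I would prove this as follows. Differentiability of $F$ at $q$ makes $F$ continuous at $q$, whence $F(q)=y$, and it provides an $\eta>0$ with
$$
    F(x)-y\,\ge\,\frac{f}{2}(x-q)\ \ (x\in[q,q+\eta]),\qquad
    y-F(x)\,\ge\,\frac{f}{2}(q-x)\ \ (x\in[q-\eta,q]).
$$
Writing $\varepsilon:=\|G-F\|_\infty$ and choosing $\delta>0$ so small that $3\varepsilon/f\le\eta$ whenever $\varepsilon\le\delta$, one checks at $x_+:=q+2\varepsilon/f$ that $G(x_+)\ge F(x_+)-\varepsilon\ge y$, hence $G^\leftarrow(y)\le x_+$; and at $x_-:=q-3\varepsilon/f$ that $G(x_-)\le F(x_-)+\varepsilon\le y-\varepsilon/2<y$ (for $\varepsilon>0$; the case $\varepsilon=0$ forces $G=F$ and is trivial), hence, by monotonicity of $G$, $G^\leftarrow(y)\ge x_-$. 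Thus $|G^\leftarrow(y)-q|\le 3\varepsilon/f$, which is the claim with $C:=3/f$. In particular $T$ is H\"older-$1$ continuous at $F$ w.r.t.\ $(\|\cdot\|_\infty,|\cdot|)$ and $\widehat\F$ in the sense of (\ref{def eq for QC - def}), and no property of $\widehat\F$ beyond ``being a set of distribution functions'' is used.

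To finish, invoke Theorem~\ref{generalized GC - dependent}: the assumptions $\alpha(n)\le Kn^{-\vartheta}$ with $\vartheta>2r$ and $r\in[0,\tfrac12)$ give $n^r\|\widehat F_n-F\|_\infty\to 0$ $\pr$-a.s. Fix $\omega$ in the corresponding almost sure event; then $\|\widehat F_n(\omega)-F\|_\infty\le\delta$ for all large $n$, so the claim applied to $G=\widehat F_n(\omega)$ yields $|\widehat F_n(\omega)^\leftarrow(y)-q|\le C\,\|\widehat F_n(\omega)-F\|_\infty$ for such $n$; multiplying by $n^r$ and letting $n\to\infty$ gives $n^r|\widehat F_n(\omega)^\leftarrow(y)-F^\leftarrow(y)|\to0$. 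Since $\omega$ was an arbitrary point of a $\pr$-full set, this is the assertion.

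I do not anticipate a genuine obstacle. The only point calling for a little care is translating ``$F$ differentiable at $q$ with $F'(q)>0$'' into the two one-sided linear lower bounds near $q$ and, in tandem, keeping the strict versus non-strict inequalities in the definition of $G^\leftarrow$ straight; once that is set up, the remainder is a routine sandwich argument.
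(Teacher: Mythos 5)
Your proposal is correct and follows the same route as the paper: a local Lipschitz (H\"older-$1$) estimate $|G^\leftarrow(y)-F^\leftarrow(y)|\le C\,\|G-F\|_\infty$ for distribution functions $G$ uniformly close to $F$, combined with Theorem~\ref{generalized GC - dependent}. The only difference is that the paper obtains this estimate by citing the proof of Theorem~2 in \cite{Zaehle2010} (viewing the quantile as the L-functional ${\cal L}_{K_y}$ with $K_y=\eins_{[y,1]}$), whereas you prove it directly with a correct sandwich argument at $q\pm{\cal O}(\varepsilon/f)$.
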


\begin{proof}
Since $F^{\leftarrow}(y)={\cal L}_{K_y}(F)$ with $K_y=\eins_{[y,1]}$, the proof of Theorem 2 in \cite{Zaehle2010} shows that, under the above assumptions on $F$, $\pr$-a.s.\ there is some constant $C>0$ such that $|\widehat F_n^{\leftarrow}(y)-F^{\leftarrow}(y)|\le C\,\|\widehat F_n-F\|_\infty$ for all $n\in\N$. Now the claim follows directly from Theorem \ref{generalized GC - dependent}.
\end{proof}


\subsection{Law-invariant coherent risk measures}\label{risk measures}

Let $\rho$ be a law-invariant coherent risk measure on ${\cal X}:={\cal L}^p(\Omega,{\cal F},\pr)$ for some $p\in[1,\infty]$, i.e.\ $\rho$ be a mapping from ${\cal X}$ to $\R$ being
\begin{itemize}
    \item monotone: $\rho(X)\le\rho(Y)$ for all $X,Y\in{\cal X}$ with $X\le Y$ $\pr$-almost surely,
    \item translation-equivariant: $\rho(X+m)=\rho(X)+m$ for all $X\in{\cal X}$ and $m\in\R$,
    \item subadditive: $\rho(X+Y)\le\rho(X)+\rho(Y)$ for all $X,Y\in{\cal X}$,
    \item positively homogenous: $\rho(\lambda X)=\lambda\rho(X)$ for all $X\in{\cal X}$ and $\lambda\ge 0$.
\end{itemize}
Since $\rho$ is law-invariant, we may regard it as a functional ${\cal R}$ on the set $\F^p$ of all distribution functions of random variables in ${\cal L}^p(\Omega,{\cal F},\pr)$. If the underlying probability space  $(\Omega,{\cal F},\pr)$ is rich enough to support a random variable with continuous distribution (which is equivalent to $(\Omega,{\cal F},\pr)$ being atomless in the sense of \cite[Definition A.26]{FoellmerSchied2004}), then the functional ${\cal R}$ admits the representation
\begin{equation}\label{robust representation}
    {\cal R}(F) = \sup_{K\in{\cal K}_{\cal R}}\,{\cal L}_K(F)\qquad\forall\,F\in\F^p,
\end{equation}
where ${\cal L}_K$ is the L-functional associated with kernel $K$ (cf.\ (\ref{def L functional})) and ${\cal K}_{\cal R}$ is a suitable set of continuous convex distribution functions on the unit interval. This was shown in \cite[Corollary 4.72]{FoellmerSchied2004} for $p=\infty$, and in \cite{KraetschmerZaehle2011} for the general case. Notice that in \cite{KraetschmerZaehle2011} the role of $K$ is played by $\check g$.

If condition (a) in Lemma \ref{HD of L functional} holds for every $K\in{\cal K}_{\cal R}$ with the same $L(x),\beta',C'$, and $F\in\F^p$ satisfies condition (b) in Lemma \ref{HD of L functional}, then, in view of
$$
    |{\cal R}(F_n) - {\cal R}(F)|
    \,= \, \Big|\sup_{K\in{\cal K}_{\cal R}}{\cal L}_K(F_n)-\sup_{K\in{\cal K}_{\cal R}}{\cal L}_K(F)\Big|
    \, \le \ \sup_{K\in{\cal K}_{\cal R}}|{\cal L}_K(F_n) - {\cal L}_K(F)|
$$
the proof of Lemma \ref{HD of L functional} shows that the functional ${\cal R}:\F^p\to\R$ is H\"older-$1$ continuous at $F$ w.r.t.\ $(\|\cdot\|_{\phi_{\gamma,F}},|\cdot|)$ and $\widehat\F$. So, in this case assertions (i)--(ii) in Theorem \ref{SLLN L abh - eq} also hold for ${\cal R}$ in place of ${\cal L}$. This seems to be the first general respective result in the context of law-invariant coherent risk measures.

\begin{examplenorm}\label{example one sided moments}
It is easy to show that
$$
    \rho_{p,a}(X)\,:=\,\ex[X]+a\,\ex[((X-\ex[X])^{+})^p]^{1/p} 
$$
provides a law-invariant coherent risk measure (called risk measure based on one-sided moments) on ${\cal L}^{p}(\Omega,{\cal F},\pr)$ for every $p\in[1,\infty)$ and $a\in[0,1]$. It was shown in \cite[Lemma A.5]{KraetschmerZaehle2011} that the associated functional ${\cal R}_{p,a}:\F^p\to\R$ is not a L-functional when $a>0$. But according to our preceding discussion ${\cal R}_{p,a}$ can be represented as in (\ref{robust representation}). We clearly have
\begin{eqnarray*}
    1-K(x)
    & = & {\cal L}_K(F_{B_{1-x}})\\
    & \le & {\cal R}_{p,a}(F_{B_{1-x}})\\
    & = & (1-x)+a((1-x)x^p)^{1/p}\\
    & \le & (1+a)(1-x)^{1/p}
\end{eqnarray*}
(where $F_{B_{1-x}}$ is the Bernoulli distribution function with expectation $1-x$) for all $x\in(0,1)$ and $K\in{\cal K}_{{\cal R}_{p,a}}$. Thus Remark \ref{example for df K} and the preceding discussion show that the risk functional ${\cal R}_{p,a}$ is H\"older-$1$ continuous at $F\in\F^p$ w.r.t.\ $(\|\cdot\|_{\phi_{\gamma,F}},|\cdot|)$ and $\widehat\F$, provided $F$ satisfies condition (b) in Lemma \ref{HD of L functional} with $\beta'=1-\frac{1}{p}$.
{\hspace*{\fill}$\Diamond$\par\bigskip}
\end{examplenorm}


\subsection{V-functionals}\label{u and v statistics}

Let $g:\R^2\to\R$ be a measurable function, and $\F_g$ be the class of all distribution functions $F$ on the real line for which $\int_{-\infty}^\infty\int_{-\infty}^\infty|g(x_1,x_2)|dF(x_1)dF(x_2)<\infty$. The functional ${\cal V}$, defined by
$$
    {\cal V}(F):={\cal V}_g(F):=\int_{-\infty}^\infty\int_{-\infty}^\infty g(x_1,x_2)\,dF(x_1)dF(x_2),\qquad F\in\F_g,
$$
is called von Mises-functional (or simply V-functional) associated with $g$. It was shown in \cite{BeutnerZaehle2012} that under fairly weak assumptions on $g$ and $F\in\F_g$ the functional ${\cal V}$ is H\"older-$1$ continuous at $F$ w.r.t.\ $(\|\cdot\|_\phi,|\cdot|)$ and $\widehat\F$. Thus, from Theorems \ref{generalized GC}--\ref{generalized GC - dependent - ordinary} one can easily derive MZ-SLLNs and SLLNs for ${\cal V}(\widehat F_n)$; see also \cite{BeutnerZaehle2012}.

MZ-SLLNs for i.i.d.\ data that can be obtained with the help of Theorem \ref{generalized GC} are already known from  \cite{GineZinn1992,Sen1974,Teicher1998}. Related ordinary SLLNs can be found in \cite{Hoeffding1961} for i.i.d.\ data, in \cite{Wang1995} for $\phi^*$-mixing data, in \cite{Arcones1998} for $\beta$-mixing data, and in \cite{Aaronson et al 1996} for ergodic stationary data. The proofs in \cite{Wang1995} contain gaps as was revealed in \cite[p.\,14]{Arcones1998}. The conditions on $g$, $F$ and the mixing coefficients in \cite[Theorem 1]{Arcones1998} are comparable to those under which Theorem \ref{generalized GC - dependent - ordinary} and Remark \ref{generalized GC - dependent - ordinary - remark} above yield ordinary SLLNs, but in our setting we can consider even $\alpha$-mixing. The assumptions on the kernel $g$ in \cite{Aaronson et al 1996} are more restrictive than the conditions we would have to impose in our setting. On the other hand, ergodicity is a weaker assumption than $\alpha$-mixing.

To the best of the author's knowledge, so far MZ-SLLNs for weakly dependent data can be found only in \cite{DehlingSharipov2009}. In \cite{DehlingSharipov2009} the data are assumed to be $\beta$-mixing. In the case of a bounded kernel $g$, Theorem 1 in \cite{DehlingSharipov2009} assumes that the mixing coefficients satisfy $\sum_{n=1}^\infty n\beta(n)<\infty$ in order to obtain a MZ-SLLN for any $r\in[0,1/2)$. With the help of Theorem \ref{generalized GC - dependent} above this condition can be relaxed to $\alpha(n)={\cal O}(n^{-1})$, even in the less restrictive case of $\alpha$-mixing. On the other hand, Theorem 2 in \cite{DehlingSharipov2009} covers also the case of unbounded kernels $g$.

It was shown in \cite{BeutnerZaehle2012} that V-functionals that are degenerate w.r.t.\ $(g,F)$ are typically even H\"older-$2$ continuous at $F$ w.r.t.\ $(\|\cdot\|_\phi,|\cdot|)$ and $\widehat\F$. So the rate of convergence of degenerate V-statistics is typically twice the rate of convergence of non-degenerate V-statistics; for details see again \cite{BeutnerZaehle2012}.


\section{Proof of Theorem \ref{generalized GC}}\label{appendix a1}

By the usual quantile transformation \cite[p.103]{ShorackWellner1986}, we may and do choose a sequence of i.i.d.\ $U[0,1]$-random variables, possibly on an extension $(\overline\Omega,\overline{\cal F},\overline\pr)$ of the original probability space $(\Omega,{\cal F},\pr)$, such that the corresponding empirical distribution function $\widehat G_n$ satisfies $\widehat F_n=\widehat G_n(F)$ $\overline\pr$-a.s.
Then
\begin{eqnarray*}
    n^{r}\|\widehat F_n-F\|_\phi
    & = & n^{r}\,\sup_{x\in\R}|\widehat G_n(F(x))-F(x)|\,\phi(x)\\
    & \le & n^{r}\sup_{s\in(0,1)}|\widehat G_n(s)-s|\,w(s)
\end{eqnarray*}
with $w(s):=\phi(\max\{F^\leftarrow(s);F^\rightarrow(s)\})$, where $F^\leftarrow$ and $F^\rightarrow$ denote the left- and the right-continuous inverse of $F$, respectively. According to Theorem 7.3\,(3) in \cite{Andersen et al 1988}, the latter bound converges $\overline\pr$-a.s.\ to $0$ as $n\to\infty$ if and only if $\int_{(0,1)}w(s)^{1/(1-r)}ds<\infty$. Since $\int_{(0,1)}w(s)^{1/(1-r)}\,ds=\int_{\R}\phi(x)^{1/(1-r)}\,dF(x)$ by a change-of-variable (and the fact that $F^\leftarrow=F^\rightarrow$ $ds$-almost everywhere), and since this integral is finite by assumption, we thus obtain $n^{r}\|F_n-F\|_\phi\to 0$ $\overline\pr$-a.s.


\section{Proof of Theorem \ref{generalized GC - dependent}}\label{appendix a0}

In this section, we will prove Theorem \ref{generalized GC - dependent}. By the usual quantile transformation \cite[p.103]{ShorackWellner1986} (which works also for mixing data) it suffices to prove the result in the special case of $U[0,1]$-distributed random variables. 
Let $(U_i)=(U_i)_{i\in\N}$ be an $\alpha$-mixing sequence of identically $U{[0,1]}$-distributed random variables on some probability space $(\Omega,{\cal F},\pr)$. Let $\I$ be the identity on $[0,1]$, and $\widehat G_n:=\frac{1}{n}\sum_{i=1}^n\eins_{[U_i,1]}$ be the empirical distribution function of $U_1,\dots,U_n$.

\begin{theorem}\label{mzlln for ep unif} 
Let $r\in[0,1/2)$, $C>0$ and $\vartheta>2r$. Suppose that the mixing coefficients $(\alpha(n))$ of the sequence $(U_i)$ satisfy $\alpha(n)\le Cn^{-\vartheta}$ for all $n\in\N$. Then
\begin{equation}\label{mzlln for ep unif - eq 2}
    n^{r}\|\widehat G_n-\I\|_\infty\longrightarrow 0\qquad\pr\mbox{-a.s.}
\end{equation}
\end{theorem}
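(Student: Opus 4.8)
The plan is to prove the Marcinkiewicz-Zygmund strong law for the empirical distribution function of an $\alpha$-mixing $U[0,1]$-sequence by a dyadic-blocking argument combined with a maximal/moment inequality for $\alpha$-mixing partial sums. Concretely, I would fix a grid resolution and reduce the sup over $[0,1]$ to a sup over finitely many grid points plus a monotonicity correction term. For a fixed $s\in(0,1)$, the centered sum $S_n(s):=\sum_{i=1}^n(\eins_{[U_i,1]}(s)-s) = \sum_{i=1}^n(\eins_{\{U_i\le s\}}-s)$ is a partial sum of bounded, centered, stationary (or merely identically distributed) $\alpha$-mixing random variables, so Rio-type covariance/moment inequalities apply. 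The key quantitative input will be a bound of the form $\ex[|S_n(s)|^{2q}] \le C_q\, n^q$ for a suitable even integer $2q$, valid uniformly in $s$, under the polynomial mixing rate $\alpha(n)\le Cn^{-\vartheta}$ — here one needs $\vartheta$ large enough relative to $q$, and the point of choosing $r<1/2$ is that one may take $q$ as large as needed since the constraint is only $\vartheta>2r$.

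The main steps, in order, would be: (1) Along the dyadic subsequence $n=2^k$, discretize $[0,1]$ into $2^{m_k}$ intervals (with $m_k$ a multiple of $k$, tuned below) and use monotonicity of both $\widehat G_n$ and $\I$ to bound $\|\widehat G_{2^k}-\I\|_\infty$ by $\max_{j}|S_{2^k}(s_j)|/2^k$ plus a deterministic error of order $2^{-m_k}$, which is negligible once $m_k\to\infty$. (2) Apply the moment bound together with a union bound over the $2^{m_k}$ grid points and Markov's inequality: $\pr[2^{kr}\max_j |S_{2^k}(s_j)|/2^k > \varepsilon] \le 2^{m_k}\, C_q\, 2^{kq}\, \varepsilon^{-2q}\, 2^{2kr(q)} 2^{-2kq}$, i.e. of order $2^{m_k} 2^{-kq(1-2r)}$; choosing $m_k = \lfloor \eta k\rfloor$ with $\eta < q(1-2r)$ makes this summable in $k$, so Borel-Cantelli gives convergence along $n=2^k$. (3) Fill the gaps between $2^k$ and $2^{k+1}$: for $2^k\le n<2^{k+1}$ one has the standard sandwiching $\|\widehat G_n-\I\|_\infty \le 2\|\widehat G_{2^k}-\I\|_\infty + 2\|\widehat G_{2^{k+1}}-\I\|_\infty + (\text{oscillation term})$, or more directly one controls $\max_{2^k\le n<2^{k+1}} 2^{kr}\|\widehat G_n - \I\|_\infty$ via a maximal inequality (e.g. a Rio or Móricz-type maximal moment inequality for $\alpha$-mixing sums) applied to $\max_{n\le 2^{k+1}}|S_n(s_j)|$, at the cost of only a logarithmic/constant factor, which does not affect summability. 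Then (2) applied to the maximal version closes the argument on the whole sequence.

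The hard part will be securing the uniform-in-$s$ high-moment bound $\ex[|S_n(s)|^{2q}]\le C_q n^q$ with the right dependence on the mixing rate, and likewise its maximal counterpart $\ex[\max_{m\le n}|S_m(s)|^{2q}]\le C_q' n^q$; this is where the arguments of Philipp and of Rio (Chapter 6 of \cite{Rio2000}, on moment inequalities and the functional LIL/SLLN for $\alpha$-mixing empirical processes) must be invoked carefully, tracking that $\vartheta>2r$ suffices by taking $q$ large. One subtlety is that $\vartheta>2r$ can be arbitrarily small (when $r$ is small), so $q$ must be pushed correspondingly high, and one must check the moment inequality remains valid — the Rio-type bounds do accommodate this because the mixing contribution to the $2q$-th moment is controlled by $\sum_n n^{2q-2}\alpha(n)$-type sums which converge once $\vartheta > 2q-1$; choosing $q$ first and then noting the hypothesis only needs $\vartheta>2r$ is reconciled by instead using a more refined interpolation (as in Rio) where the relevant quantity is $\int_0^1 \alpha^{\rightarrow}(u)^{?}\,du$-type and one does not need $\vartheta$ to grow with $q$. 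A second, more routine obstacle is bookkeeping the two-sided monotonicity discretization cleanly; I would handle it exactly as in the classical Glivenko-Cantelli proof, noting $\widehat G_n(s_j-) - s_{j} \le \widehat G_n(s) - s \le \widehat G_n(s_{j+1}) - s_{j}$ for $s\in[s_j,s_{j+1})$, so the discretization error is bounded by the mesh $\max_j(s_{j+1}-s_j)$ uniformly and deterministically.

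Once Theorem \ref{mzlln for ep unif} is established, Theorem \ref{generalized GC - dependent} follows immediately: the quantile transformation of \cite[p.103]{ShorackWellner1986} (valid for dependent data, since it is a pathwise identity not requiring independence) gives $\widehat F_n = \widehat G_n(F)$ and hence $\|\widehat F_n - F\|_\infty \le \|\widehat G_n - \I\|_\infty$, which is exactly the reduction announced at the start of Section \ref{appendix a0}.
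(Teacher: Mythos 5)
There is a genuine gap, and it sits exactly where you predicted the ``hard part'' would be: the uniform high-moment bound $\ex[|S_n(s)|^{2q}]\le C_q n^q$ is simply false under the hypotheses of the theorem. With only $\alpha(n)\le Cn^{-\vartheta}$ for some possibly very small $\vartheta\in(0,1)$, the covariance bound $|\covi(\eins_{\{U_i\le s\}},\eins_{\{U_j\le s\}})|\le 4\alpha(|i-j|)$ is attained in order of magnitude by suitable $\alpha$-mixing sequences, so already the variance of $S_n(s)$ can be of order $n\sum_{i<n}\alpha(i)\asymp n^{2-\vartheta}\gg n$; a fortiori $\ex[|S_n(s)|^{2q}]\gtrsim n^{q(2-\vartheta)}$. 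Worse, pushing $q$ up goes in the wrong direction: in the Rio-type Rosenthal inequality the ``linear'' term for bounded variables scales like $n\int_0^1[\alpha^{-1}(u)\wedge n]^{2q-1}\,du\asymp n^{2q-\vartheta}$, so Markov at level $\varepsilon n^{1-r}$ yields only $n^{2qr-\vartheta}$, which for $r>0$ \emph{deteriorates} as $q$ grows. The second moment ($q=1$) is actually the optimal choice here, giving $n^{2r-\vartheta}$, and then your union bound over $2^{m_k}$ grid points (with $m_k\gtrsim rk$ forced by the mesh-error requirement $2^{kr}2^{-m_k}\to0$) costs an extra factor $2^{rk}$ and would require something like $\vartheta>3r$ rather than the stated $\vartheta>2r$. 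Your parenthetical hope that ``a more refined interpolation (as in Rio)'' removes the dependence on $q$ does not rescue the plan; it is precisely the step that needs a different idea.

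The paper's proof avoids both obstructions. It never discretizes in $t$: Proposition 7.1 of \cite{Rio2000} bounds the second moment of $\sup_{t\in(0,1)}Z_{p,q}(t)$ over a block of length $q$ by $q\,(1+4\sum_{i<q}\alpha(i))(2+\log q)^2$, i.e.\ a maximal-in-$t$ bound is obtained for free up to a $(\log q)^2$ factor, so no union bound over grid points is needed and only second moments enter. Combined with Markov's inequality this gives $\pr[F_N]\lesssim 2^{N(2R-\vartheta)}N^2$, summable precisely when $\vartheta>2R>2r$. The passage from the dyadic scales to all $n$ is handled not by a maximal inequality in $n$ but by the binary expansion $n=2^{N_n}+\sum_j h_j(n)2^{j-1}$ together with the subadditivity $Z_{p,u}\le Z_{p,q}+Z_{p+q,u-q}$ and a Borel--Cantelli argument over \emph{all} sub-blocks $Z_{2^N+b2^j,2^{j-1}}$, with a geometric weight $2^{-\beta(N-j)}$ making the triple sum converge. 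If you want to salvage your outline, replace your step (2) by Rio's sup-level second-moment inequality and your step (3) by this binary-expansion blocking; as written, your argument does not close under the hypothesis $\vartheta>2r$.
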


The proof of Theorem \ref{mzlln for ep unif} will be carried out in three steps (Sections \ref{Auxiliary results, Part I}--\ref{proof of mzlln for ep -- actual proof}). For every $p\in\N_0$, $q\in\N$ and $t\in[0,1]$, define
$$
    Z_{p,q}(t)\,:=\,\Big|\sum_{i=p+1}^{p+q}\Big(\eins_{[0,t]}(U_i)-t)\Big)\Big|.
$$
Thus, in order to verify (\ref{mzlln for ep unif - eq 2}), we have to show
\begin{equation}\label{convergence of interest - modi}
    \frac{1}{n^{1-r}}\,\sup_{t\in(0,1)}\,Z_{0,n}(t)\longrightarrow 0\qquad\pr\mbox{-a.s.}
\end{equation}
In Sections \ref{Auxiliary results, Part I} we will collect some elementary properties of $Z_{p,q}(t)$. In Section \ref{Auxiliary results, Part II} we will prove some nontrivial properties of $Z_{p,q}(t)$. Finally, in Section \ref{proof of mzlln for ep -- actual proof} we will prove (\ref{convergence of interest - modi}).


\subsection{Auxiliary results, Part I}\label{Auxiliary results, Part I}

Of course, for every $p\in\N_0$ and $q,u\in\N$ with $q<u$, and every $t\in(0,1)$, the elementary inequality
\begin{equation}\label{lemma on Zpqst - eq - 4}
    Z_{p,u}(t)\,\le\,Z_{p,q}(t)\,+\,Z_{p+q,u-q}(t)
\end{equation}
holds; see also \cite[p.\,333]{Philipp1977}. Let
\begin{equation}\label{def N n}
    N_n\,:=\,\Big\lfloor\frac{\log n}{\log 2}\Big\rfloor
\end{equation}
be the largest $N\in\N_0$ with $2^N\le n$. Then $n$ can be represented as
\begin{equation}\label{represent n}
    n\,=\,2^{N_n}\,+\,\sum_{j=1}^{N_n}h_j(n)\,2^{j-1}
\end{equation}
for suitable $h_j(n)\in\{0,1\}$, $j=1,\ldots,N_n$. Equation (\ref{represent n}) and a repeated application of (\ref{lemma on Zpqst - eq - 4}) yield that for every $n\in\N$ and $t\in(0,1)$
\begin{equation}\label{lemma on Zpqst corollary - eq - 1}
    Z_{0,n}(t)
    \,\le\, Z_{0,2^{N_n}}(t)\,+\,\sum_{j=1}^{N_n}Z_{2^{N_n}+b_j(n)2^j,2^{j-1}}(t)
\end{equation}
holds for suitable integers $b_j(n)\in\{0,\ldots,2^{N_n-j}-1\}$, $j\in\{1,\ldots,N_n\}$.


\subsection{Auxiliary results, Part II}\label{Auxiliary results, Part II}

Lemma \ref{borel cantelli application} below will be crucial for the central part of the proof of Theorem \ref{mzlln for ep unif} (cf.\ Section \ref{proof of mzlln for ep -- actual proof}). For the proof of Lemma \ref{borel cantelli application} we will need the following lemma, which in turn is an immediate consequence of Proposition 7.1 in \cite{Rio2000} and Markov's inequality.

\begin{lemma}\label{probability bound}
For all $p\in\N_0$, $q\in\N$ and $x>0$,
\begin{equation}\label{probability bound - eq}
    \pr\Big[q^{-1/2}\sup_{t\in(0,1)}Z_{p,q}(t)\ge x\Big]\,\le\,\frac{1}{x^2} \Big(1+4\sum_{i=0}^{q-1}\alpha(i)\Big)(2+\log{q})^2.
\end{equation}
\end{lemma}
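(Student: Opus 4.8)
The plan is to obtain the bound directly from a maximal second-moment inequality for the uniform empirical process of $\alpha$-mixing sequences --- Proposition 7.1 in \cite{Rio2000} --- combined with Markov's inequality; no deeper argument is required.

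First I would record the form of \cite[Proposition 7.1]{Rio2000} that is needed: for any $\alpha$-mixing sequence of $U[0,1]$-distributed random variables with mixing coefficients $(\alpha(i))$, the centered empirical process $t\mapsto\sum_{i=1}^{q}\big(\eins_{[0,t]}(U_i)-t\big)$ satisfies
$$
    \ex\Big[\sup_{t\in(0,1)}\Big|\sum_{i=1}^{q}\big(\eins_{[0,t]}(U_i)-t\big)\Big|^2\Big]\ \le\ q\,\Big(1+4\sum_{i=0}^{q-1}\alpha(i)\Big)\,(2+\log q)^2 .
$$
Since the mixing coefficient $\alpha(n)$ is defined as a supremum over all starting indices $k\ge 1$, the shifted block $U_{p+1},\dots,U_{p+q}$ is again a family of $U[0,1]$-distributed random variables whose mixing coefficients are dominated by $(\alpha(i))$; hence the same bound holds with $Z_{p,q}(t)$ in place of $\big|\sum_{i=1}^{q}(\eins_{[0,t]}(U_i)-t)\big|$, that is,
$$
    \ex\Big[\sup_{t\in(0,1)}Z_{p,q}(t)^2\Big]\ \le\ q\,\Big(1+4\sum_{i=0}^{q-1}\alpha(i)\Big)\,(2+\log q)^2 .
$$
(The supremum over $t\in(0,1)$ is ${\cal F}$-measurable because $t\mapsto Z_{p,q}(t)$ is \cadlag, so it equals the supremum over the countable set $\big((0,1)\cap\Q\big)\cup\{U_{p+1},\dots,U_{p+q}\}$.)

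It then remains to apply Markov's inequality to the nonnegative random variable $\sup_{t\in(0,1)}Z_{p,q}(t)^2$: for $x>0$,
$$
    \pr\Big[q^{-1/2}\sup_{t\in(0,1)}Z_{p,q}(t)\ge x\Big]
    \ =\ \pr\Big[\sup_{t\in(0,1)}Z_{p,q}(t)^2\ge x^2q\Big]
    \ \le\ \frac{1}{x^2q}\,\ex\Big[\sup_{t\in(0,1)}Z_{p,q}(t)^2\Big]
    \ \le\ \frac{1}{x^2}\Big(1+4\sum_{i=0}^{q-1}\alpha(i)\Big)(2+\log q)^2,
$$
which is precisely (\ref{probability bound - eq}). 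There is essentially no obstacle here; the only point deserving a moment's care is to make sure the version of \cite[Proposition 7.1]{Rio2000} one invokes matches the normalization in the statement --- in particular the constant $4$ in front of $\sum_{i=0}^{q-1}\alpha(i)$ and the factor $(2+\log q)^2$ --- and that it applies to a shifted, merely identically distributed (not necessarily stationary) block, both of which are taken care of by the definition of $\alpha(n)$ as a supremum over the starting index.
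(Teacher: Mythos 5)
Your proposal is correct and is exactly the paper's argument: the paper derives this lemma as an immediate consequence of \cite[Proposition 7.1]{Rio2000} together with Markov's inequality, which is precisely the route you take (including the observation that the shifted block inherits the mixing bound because $\alpha(n)$ is defined as a supremum over starting indices).
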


Now, let $R>r$ be sufficiently close to $r$ (to be concretized later on) and $\beta>0$ be sufficiently close to zero (to be concretized later on). For every $N\in\N$, define the event
$$
    F_N\,:=\,\Big\{\sup_{t\in(0,1)}Z_{0,2^N}(t)\,\ge\,2^{N(1-R)}\Big\}.
$$
For every $N\in\N$, $j\in\{1,\ldots,N\}$ and $b\in\{0,\ldots,2^{N-j}-1\}$ define the event
$$
    H_N(j,b)\,:=\,\Big\{\sup_{t\in(0,1)}Z_{2^{N}+b2^j,2^{j-1}}(t)\,\ge\,2^{N(1-R)}\,2^{-\beta(N-j)}\Big\}.
$$
Moreover, for every $N\in\N$ define the event
$$
    H_N \, := \, \bigcup_{j=1}^{N}\,\bigcup_{b=0}^{2^{N-j}-1}\,H_N(j,b).
$$

\begin{lemma}\label{borel cantelli application}
$\pr\big[\limsup_{N\to\infty} F_N\big]=\pr\big[\limsup_{N\to\infty} H_N\big]=0$. In particular, $\pr$-a.s.\ there are some constants $K_1,K_2>0$ such that
$$
    \sup_{t\in(0,1)}Z_{0,2^N}(t)\,\le\,K_1\,2^{N(1-R)}
$$
for all $N\in\N$, and
$$
    \sup_{t\in(0,1)}Z_{2^{N}+b2^j,2^{j-1}}(t)\,\le\,K_2\,2^{N(1-R)}\,2^{-\beta(N-j)}
$$
for all $N\in\N$, $j\in\{1,\ldots,N\}$ and $b\in\{0,\ldots,2^{N-j}-1\}$.
\end{lemma}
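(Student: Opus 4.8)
The plan is to bound both $\pr[\limsup_N F_N]$ and $\pr[\limsup_N H_N]$ by zero via the first Borel-Cantelli lemma, i.e.\ by showing $\sum_N \pr[F_N]<\infty$ and $\sum_N \pr[H_N]<\infty$. For the first series, apply Lemma \ref{probability bound} with $p=0$, $q=2^N$ and $x=2^{N(1-R)}/2^{N/2}=2^{N(1/2-R)}$. The probability bound gives
\[
    \pr[F_N]\,\le\,2^{-2N(1/2-R)}\Big(1+4\sum_{i=0}^{2^N-1}\alpha(i)\Big)(2+N\log 2)^2 .
\]
Here the key input is the mixing assumption $\alpha(i)\le C i^{-\vartheta}$ with $\vartheta>2r$: since $R$ may be chosen arbitrarily close to $r$, we still have $\vartheta>2R$ once $R$ is close enough to $r$, and then $\sum_{i=0}^{2^N-1}\alpha(i)=O(1)+O\big((2^N)^{1-\vartheta}\big)$ is bounded (if $\vartheta\ge 1$, this is $O(1)$; if $\vartheta<1$, it grows like $2^{N(1-\vartheta)}$, which must be absorbed by the exponential factor — so I would keep track of this case). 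In either case the dominant behaviour of $\pr[F_N]$ is $2^{-N(1-2R)}$ (or $2^{-N(1-2R-(1-\vartheta)^+)}$) times a polynomial in $N$, and since $1-2R>0$ (and $1-2R>(1-\vartheta)^+$ is equivalent to $\vartheta>2R$, which holds), the series $\sum_N\pr[F_N]$ converges.

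For the second series, first bound $\pr[H_N]$ by the union bound: $\pr[H_N]\le\sum_{j=1}^N\sum_{b=0}^{2^{N-j}-1}\pr[H_N(j,b)]$. Each term $\pr[H_N(j,b)]$ is estimated via Lemma \ref{probability bound} with $p=2^N+b2^j$, $q=2^{j-1}$ and $x=2^{N(1-R)}2^{-\beta(N-j)}/2^{(j-1)/2}$, giving an upper bound of order
\[
    2^{-2N(1-R)}\,2^{2\beta(N-j)}\,2^{j}\,\Big(1+4\sum_{i=0}^{2^{j-1}-1}\alpha(i)\Big)(2+(j-1)\log 2)^2 .
\]
Summing over $b$ contributes a factor $2^{N-j}$, and the mixing sum is again $O(\max\{1,2^{j(1-\vartheta)}\})$; collecting powers of $2$, the $j$-sum of these bounds is, up to polynomial factors in $N$, of order $2^{-N(1-2R)}\sum_{j=1}^N 2^{2\beta(N-j)}\cdot(\text{bounded or }2^{j(1-\vartheta)^+})$, which for $\beta$ small enough and $R$ close enough to $r$ is again summable in $N$. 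The Borel-Cantelli lemma then yields $\pr[\limsup_N F_N]=\pr[\limsup_N H_N]=0$. The final ``in particular'' assertion is immediate: on the complement of $\limsup_N F_N$, all but finitely many $\omega$-dependent indices $N$ satisfy $\sup_t Z_{0,2^N}(t)<2^{N(1-R)}$, and enlarging the constant to cover the finitely many exceptional $N$ (using that each $Z_{0,2^N}(t)\le 2^N$ deterministically) gives $K_1$; the argument for $K_2$ and $H_N$ is identical.

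The main obstacle is bookkeeping the exponents: one must verify that the freedom to pick $R>r$ arbitrarily close to $r$ and $\beta>0$ arbitrarily close to $0$ really does make both series converge under the sole hypothesis $\vartheta>2r$, paying attention to the case $\vartheta<1$ where the partial sums $\sum_{i<q}\alpha(i)$ themselves grow polynomially in $q$ and eat into the available exponential margin. Everything else — the union bounds, the substitution into Lemma \ref{probability bound}, and the passage from Borel-Cantelli to the stated almost-sure inequalities — is routine.
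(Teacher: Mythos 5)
Your proposal is correct and follows essentially the same route as the paper: Lemma \ref{probability bound} applied to $F_N$ and (via a union bound) to each $H_N(j,b)$, followed by the exponent bookkeeping and Borel--Cantelli. The only cosmetic difference is that the paper sidesteps your case distinction on $\vartheta$ by assuming without loss of generality that $\vartheta\in(0,1)$ (legitimate, since $2r<1$ and decreasing $\vartheta$ only weakens the hypothesis), which collapses your $(1-\vartheta)^+$ bookkeeping into the single bound $K\,2^{N(2R-\vartheta)}N^2$.
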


\begin{proof}
By Lemma \ref{probability bound} and the assumption $\alpha(i)\le C\,i^{-\vartheta}$,
\begin{eqnarray*}
    \pr\big[F_N\big]
    & = & \pr\Big[2^{-N/2}\sup_{t\in(0,1)}Z_{0,2^N}(t)\,\ge\,2^{N(1/2-R)}\Big]\\
    & \le &\frac{1}{2^{N(1-2R)}}\Big(1+4\sum_{i=0}^{2^N-1}C\,i^{-\vartheta}\Big)(2+\log{2^N})^2\\
    & \le & K\,2^{N(2R-\vartheta)}N^2
\end{eqnarray*}
for some finite constant $K>0$, where we assumed without loss of generality that $\vartheta\in(0,1)$. Choosing $R$ sufficiently close to $r$, and taking the assumption $\vartheta>2r$ into account, we obtain $\sum_{N=1}^\infty\pr[F_N]<\infty$. Now the Borel-Cantelli lemma yields $\pr[\limsup_{N\to\infty} F_N]=0$.

Again by Lemma \ref{probability bound} and the assumption $\alpha(i)\le C\,i^{-\vartheta}$,
\begin{eqnarray*}
    \pr\big[H_N(j,b)\big]
    & = & \pr\Big[2^{-(j-1)/2}\sup_{t\in(0,1)}Z_{2^N+b2^j,2^{j-1}}(t)\,\ge\,2^{-(j-1)/2}2^{N(1-R)}2^{-\beta(N-j)}\Big]\\
    & \le &\frac{1}{2^{-(j-1)}2^{2N(1-R)}2^{-2\beta(N-j)}}\Big(1+4\sum_{i=0}^{2^{j-1}-1}C\,i^{-\vartheta}\Big)(2+\log{2^{j-1}})^2\\
    & = & K\,2^{j(2-2\beta-\vartheta)}\,2^{-N(2-2R-2\beta)}\,j^2
\end{eqnarray*}
for some finite constant $K>0$, where we again assumed without loss of generality that $\vartheta\in(0,1)$. Therefore,
\begin{eqnarray*}
    \pr\big[H_N\big]
    & \le & K\,2^{-N(2-2R-2\beta)}\sum_{j=1}^{N}\,\sum_{b=0}^{2^{N-j}-1}2^{j(2-2\beta-\vartheta)}\,j^2\\
    & \le & K\,2^{-N(2-2R-2\beta)}\sum_{j=1}^{N}2^{N-j}\,2^{j(2-2\beta-\vartheta)}\,j^2\\
    & \le & K'\,2^{-N(1-2R-2\beta)}2^{N(1-\beta-\vartheta)}\\
    & = & K'\,2^{-N(\vartheta-2R-\beta)}
\end{eqnarray*}
for some finite constant $K'>0$. Choosing $R$ sufficiently close to $r$, choosing $\beta$ sufficiently close to zero, and taking the assumption $\vartheta>2r$ into account, we obtain $\sum_{N=1}^\infty\pr[H_N]<\infty$. Now the Borel-Cantelli lemma yields $\pr[\limsup_{N\to\infty} H_N]=0$.
\end{proof}


\subsection{Completion of the proof of Theorem \ref{mzlln for ep unif}}\label{proof of mzlln for ep -- actual proof}

We now prove (\ref{convergence of interest - modi}). By (\ref{lemma on Zpqst corollary - eq - 1}) and the definition of $N_n$ as the largest $N\in\N_0$ with $2^N\le n$ (cf.\ (\ref{def N n})), we have
\begin{eqnarray*}
    \frac{1}{n^{1-r}}\sup_{t\in(0,1)}Z_{0,n}(t)
    & \le & \frac{1}{n^{1-r}}\sup_{t\in(0,1)}Z_{0,2^{N_n}}(t)\,+\,\frac{1}{n^{1-r}}\sum_{j=1}^{N_n}\sup_{t\in(0,1)}Z_{2^{N_n}+b_j(n)2^j,2^{j-1}}(t)\\
    & =: & I_{n,1}\,+\,I_{n,2}
\end{eqnarray*}
for suitable $b_j(n)\in\{0,\ldots,2^{N_n-j}-1\}$. In the sequel we will show that $I_{n,1}$ and $I_{n,2}$ converge to zero $\pr$-a.s. This will complete the proof of Theorem \ref{mzlln for ep unif}.

As for $I_{n,1}$, we observe that by Lemma \ref{borel cantelli application} there is $\pr$-a.s.\ a constant $K_1>0$ such that $I_{n,1}\le n^{r-1}K_12^{N_n(1-R)}=K_1\,n^{r-R}$
for all $n\in\N$. Since $R>r$, the summand $I_{n,1}$ thus converges to zero $\pr$-a.s.

As for $I_{n,2}$, we observe that by Lemma \ref{borel cantelli application} there is $\pr$-a.s.\ a constant $K_2>0$ such that
\begin{eqnarray*}
    I_{n,2}
    & \le & \frac{1}{2^{N_n(1-r)}}\sum_{j=1}^{N_n}K_2\,2^{N_n(1-R)}\,2^{-\beta(N_n-j)}\\
    & \le & K_2\,2^{-N_n(R-r)}\sum_{j=0}^{N_n-1}2^{-\beta j}
\end{eqnarray*}
holds for all $n\in\N$. Since $R>r$, the summand $I_{n,2}$ thus converges to zero $\pr$-a.s. This completes the proof of Theorem \ref{generalized GC - dependent}


\section{Proof of Theorem \ref{generalized GC - dependent - ordinary}}\label{appendix a2}

Without loss of generality we assume $x_\phi=0$. So $\phi$ can be seen as a nonincreasing function on $[-\infty,0]$. We will only show that
$$
    \sup_{x\in(-\infty,0]}|\widehat F_n(x)-F(x)|\phi(x)\longrightarrow 0\qquad\pr\mbox{-a.s.}
$$
The analogous result for the positive real line can be shown in the same way. We will proceed in three steps, where we will combine arguments of \cite{van der Vaart 1998}--\cite{van der Vaart Wellner 1996} (Steps 1--2) with Rio's SLLN for $\alpha$-mixing data (Step 3). The latter can be found in \cite[Theorem 1\,(ii)]{Rio1995} and will be recalled in the following theorem. As before, the rightcontinuous inverse $h^\rightarrow$ of any nonincreasing function $h:\R_+\to[0,1]$ will be defined by $h^\rightarrow(y):=\sup\{x\in\R_+:h(x)>y\}$, $y\in[0,1]$, with the convention $\sup\emptyset:=0$.

\begin{theorem}{\em (Rio)}\label{Rios SLLN}
Let $\xi_1,\xi_2,\ldots$ be identically distributed random variables on some probability space $(\Omega,{\cal F},\pr)$ with $\ex[|\xi_1|]<\infty$. Suppose that $(\xi_i)$ is $\alpha$-mixing with mixing coefficients $(\alpha(n))$, and let $\alpha(y):=\alpha(\lfloor y\rfloor)$ be the \cadlag\ extension of $\alpha(\cdot)$ from $\N$ to $\R_+$. Let $G$ be the distribution function of $|\xi_1|$, and set $\overline G:=1-G$. If
\begin{equation}\label{Rios SLLN - cond}
    \int_0^1 \log\Big(1+\alpha^\rightarrow(y/2)\Big)\,\overline{G}\,^\rightarrow(y)\,dy\,<\,\infty,
\end{equation}
then $\frac{1}{n}\sum_{i=1}^n(\xi_i-\ex[\xi_i])\rightarrow 0$ $\pr$-a.s.
\end{theorem}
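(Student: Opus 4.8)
The plan is to reconstruct the classical truncation proof of this strong law (as in \cite{Rio1995,Rio2000}), taking as the main engine Rio's covariance inequality together with the Fuk--Nagaev-type maximal inequality for $\alpha$-mixing partial sums that it yields. Throughout I write $Q:=\overline G\,^\rightarrow$ for the tail quantile function of $|\xi_1|$ and keep the notation $\alpha^\rightarrow$ for the inverse mixing rate. The trivial case in which $\xi_1$ is $\pr$-a.s.\ constant is immediate, so assume $\xi_1$ is non-degenerate. First I would reduce to centered variables: subtracting the common mean $\ex[\xi_1]$ leaves the generated $\sigma$-algebras, and hence the coefficients $\alpha(n)$, unchanged, so I may replace $\xi_i$ by $\eta_i:=\xi_i-\ex[\xi_i]$. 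The tail quantile function of $|\eta_1|$ is dominated by $Q(\cdot)+\ex[|\xi_1|]$, so this shift only adds a term proportional to $\int_0^1\log(1+\alpha^\rightarrow(y/2))\,dy$ to the left-hand side of (\ref{Rios SLLN - cond}). That integral is finite: since $\xi_1$ is non-degenerate, $Q$ is bounded below by a positive constant $Q(y_0)$ on some $(0,y_0]$, whence $\int_0^{y_0}\log(1+\alpha^\rightarrow(y/2))\,dy\le Q(y_0)^{-1}\int_0^{y_0}\log(1+\alpha^\rightarrow(y/2))\,Q(y)\,dy<\infty$ by (\ref{Rios SLLN - cond}), while the part over $[y_0,1]$ is trivially finite. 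Thus (\ref{Rios SLLN - cond}) is preserved and I may assume $\ex[\xi_i]=0$.

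Next I would truncate at the level of the index, writing $\eta_i=\eta_i'+\eta_i''$ with $\eta_i':=\eta_i\eins_{\{|\eta_i|\le i\}}$. The tail part requires no mixing at all: $\sum_i\pr[|\eta_i|>i]=\sum_i\pr[|\eta_1|>i]\le\ex[|\eta_1|]<\infty$, so Borel--Cantelli gives $\eta_i''=0$ for all large $i$ almost surely, hence $n^{-1}\sum_{i\le n}\eta_i''\to0$; and $n^{-1}\sum_{i\le n}\ex[\eta_i'']\to0$ by a Ces\`aro argument, since $\ex[|\eta_1|\eins_{\{|\eta_1|>i\}}]\to0$. It therefore remains to prove $n^{-1}S_n'\to0$ $\pr$-a.s., where $S_n':=\sum_{i\le n}(\eta_i'-\ex[\eta_i'])$ is the centered, truncated partial sum.

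The heart of the matter is this bounded part. I would prove a.s.\ convergence along the dyadic subsequence $n=2^N$ and control the within-block fluctuations $\max_{2^{N-1}<k\le2^N}|S_k'-S'_{2^{N-1}}|$ by a maximal inequality, exactly as the auxiliary blocking (\ref{lemma on Zpqst corollary - eq - 1}) does in the proof of Theorem \ref{mzlln for ep unif}. Rio's covariance inequality $|\covi(X,Y)|\le2\int_0^{2\alpha}Q_X(u)Q_Y(u)\,du$ (with $Q_X$ the quantile function of $|X|$) furnishes both a variance bound and, after blocking, a Fuk--Nagaev-type estimate that splits $\pr[\max_{k\le2^N}|S_k'|\ge\varepsilon2^N]$ into a sub-Gaussian term and a large-jump term. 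The subtle point is that a crude variance-plus-Chebyshev bound only yields the \emph{stronger} hypothesis $\int_0^1\alpha^\rightarrow(u)Q(u)\,du<\infty$; to reach the weaker logarithmic condition (\ref{Rios SLLN - cond}) one must exploit the sharper inequality, whose large-jump term, once summed over the $\sim N$ dyadic scales and fed into Borel--Cantelli, reproduces precisely $\int_0^1\log(1+\alpha^\rightarrow(y/2))\,Q(y)\,dy$. The logarithm is thus manufactured by the summation over the $O(\log n)$ scales and is the signature of the borderline rate $r=0$, while the factor $1/2$ descends from the $2\alpha$ appearing in the covariance inequality.

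The hard part will be establishing that maximal/Fuk--Nagaev inequality for \emph{non-stationary} $\alpha$-mixing sums and doing the bookkeeping that matches its two terms to the single integral in (\ref{Rios SLLN - cond}); everything else (the Borel--Cantelli step and the interpolation between consecutive dyadic indices) is routine. This programme is carried out in full in \cite[Theorem 1\,(ii)]{Rio1995}; see also \cite{Rio2000}.
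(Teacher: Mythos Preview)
The paper does not give its own proof of this theorem at all: Theorem~\ref{Rios SLLN} is simply quoted from \cite[Theorem~1\,(ii)]{Rio1995} (``can be found in \cite[Theorem 1\,(ii)]{Rio1995} and will be recalled in the following theorem'') and is then used as a black box in Step~3 of Section~\ref{appendix a2}. So there is no proof in the paper to compare your proposal against.

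That said, what you have written is a faithful sketch of Rio's original argument: the reduction to mean zero, the truncation $\eta_i'=\eta_i\eins_{\{|\eta_i|\le i\}}$, the disposal of the tails by Borel--Cantelli and Ces\`aro, and the treatment of the bounded part along dyadic blocks via Rio's covariance inequality and the resulting Fuk--Nagaev-type maximal inequality are exactly the ingredients of \cite{Rio1995}. Your explanation of why the logarithm appears (summation over $O(\log n)$ dyadic scales) and of the $1/2$ (from the $2\alpha$ in the covariance bound) is correct. The one place where your outline is genuinely incomplete is, as you yourself flag, the hard step: deriving the sharp Fuk--Nagaev inequality with the right constants so that the large-jump term integrates to $\int_0^1\log(1+\alpha^\rightarrow(y/2))\,Q(y)\,dy$ rather than to the cruder $\int_0^1\alpha^\rightarrow(y)\,Q(y)\,dy$. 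For the purposes of the present paper, however, the intended ``proof'' is simply the citation, and your final sentence already supplies that.
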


{\em Step 1.} Let $L^1(d\I)$ be the space of all Lebesgue integrable functions on $[0,1]$, and $[l,u]:=\{f\in L^1(d\I):l\le f\le u\}$ be the bracket of two functions $l,u\in L^1(d\I)$ with $l\le u$ pointwise. For any $\varepsilon>0$, a bracket $[l,u]$ is called $\varepsilon$-bracket if $\int_0^1(u-l)\,d\I<\varepsilon$. Set
$$
    w(t)\,:=\,\phi(F^\leftarrow(t))\,\eins_{[0,F(0)]}(t),\qquad t\in[0,1].
$$
Since our assumption $\int_{-\infty}^\infty\phi\,dF<\infty$ implies $\int_0^1w\,d\I<\infty$, we can find as in \cite[Example 19.12]{van der Vaart 1998} a finite partition $0={t_0^\varepsilon}<{t_1^\varepsilon}<\cdots<{t_{m_\varepsilon}^\varepsilon}=1$ of $[0,1]$ such that $[l_i^\varepsilon,u_i^\varepsilon]$ with
\begin{eqnarray*}
    l_i^\varepsilon(\cdot) & := & w(t_i^\varepsilon)\eins_{[0,t_{i-1}^\varepsilon]}(\cdot)\\
    u_i^\varepsilon(\cdot) & := & w(t_{i-1}^\varepsilon)\eins_{[0,t_{i-1}^\varepsilon]}(\cdot)\,+\,w(\cdot)\,\eins_{(t_{i-1}^\varepsilon,t_i^\varepsilon]}(\cdot)
\end{eqnarray*}
($i=1,\ldots,m_\varepsilon$) are $\varepsilon$-brackets in $L^1(d\I)$ covering the class ${\cal E}_w:=\{w_s:s\in[0,1]\}$ of functions
$$
    w_s(\cdot):=w(s)\eins_{[0,s]}(\cdot).
$$

{\em Step 2.} By the usual quantile transformation, we can find a sequence of $U[0,1]$-random variables $U_{1},U_{2},\ldots$ (possibly on an extension $(\overline\Omega,\overline{\cal F},\overline{\pr})$ of the original probability space $(\Omega,{\cal F},\pr)$) such that the sequence $(U_{i})$ has the same mixing coefficients (under $\overline{\pr}$) as the sequence $(X_i)$ under $\pr$ and such that the corresponding empirical distribution function $\widehat G_n$ satisfies $\widehat F_n=\widehat G_{n}\circ F$ $\overline{\pr}$-a.s. Here we will show as in the proof of Theorem 2.4.1 in \cite{van der Vaart Wellner 1996} that
\begin{equation}\label{appendix a2 - eq 1}
    \sup_{x\le 0}\big|\widehat F_n(x)-F(x)\big|\phi(x)
    \,\le\,\max_{i=1,\ldots,m_\varepsilon}\,\max\Big\{\int_0^1 u_i^\varepsilon\,d(\widehat G_n-\I)\,;\,\int_0^1 l_i^\varepsilon\,d(\I-\widehat G_n)\Big\}\,+\,\varepsilon
\end{equation}
for every $\varepsilon>0$. Since
\begin{eqnarray*}
    \sup_{x\le 0}\big|\widehat F_n(x)-F(x)\big|\phi(x)
    & = & \sup_{x\le 0}\big|\widehat G_n(F(x))-F(x)\big|\phi(x)\\
    & \le & \sup_{s\in(0,1)}|\widehat G_n(s)-s|\,w(s)\\
    & = & \sup_{s\in(0,1)}\Big|\int_0^1 w_s\,d\widehat G_n-\int_0^1w_s\,d\I\Big|,
\end{eqnarray*}
for (\ref{appendix a2 - eq 1}) it suffices to show that
\begin{eqnarray}
    \lefteqn{\sup_{s\in(0,1)}\Big|\int_0^1 w_s\,d\widehat G_n-\int_0^1w_s\,d\I\Big|}\nonumber\\
    & \le & \max_{i=1,\ldots,m_\varepsilon}\,\max\Big\{\int_0^1 u_i^\varepsilon\,d(\widehat G_n-\I)\,;\,\int_0^1 l_i^\varepsilon\,d(\I-\widehat G_n)\Big\}\,+\,\varepsilon.
    \label{appendix a2 - eq 1 - alt}
\end{eqnarray}
To prove (\ref{appendix a2 - eq 1 - alt}), we note that for every $s\in[0,1]$ there is some $i_s\in\{1,\ldots,m_\varepsilon\}$ such that $w_s\in[l_{i_s}^\varepsilon,u_{i_s}^\varepsilon]$; cf.\ Step 1. Therefore, since $[l_{i_s}^\varepsilon,u_{i_s}^\varepsilon]$ is an $\varepsilon$-bracket,
\begin{eqnarray*}
    \int_0^1 w_s\,d\widehat G_n-\int_0^1 w_s\,d\I
    & \le & \int_0^1 u_{i_s}^\varepsilon\,d\widehat G_n-\int_0^1 w_s\,d\I\\
    & = & \int_0^1 u_{i_s}^\varepsilon\,d(\widehat G_n-\I)+\int_0^1 (u_{i_s}^\varepsilon-w_s)\,d\I\\
    & \le & \int_0^1 u_{i_s}^\varepsilon\,d(\widehat G_n-\I)+\int_0^1 (u_{i_s}^\varepsilon-l_{i_s}^\varepsilon)\,d\I\\
    & \le & \max_{i=1,\ldots,m_\varepsilon}\int_0^1 u_i^\varepsilon\,d(\widehat G_n-\I)\,+\,\varepsilon.
\end{eqnarray*}
Analogously we obtain
\begin{eqnarray*}
    \int_0^1 w_s\,d\widehat G_n-\int_0^1 w_s\,d\I & \ge & -\Big(\max_{i=1,\ldots,m_\varepsilon}\int_0^1 l_i^\varepsilon\,d(\I-\widehat G_n)\,+\,\varepsilon\Big).
\end{eqnarray*}
That is, (\ref{appendix a2 - eq 1}) holds true.

{\em Step 3.} Because of (\ref{appendix a2 - eq 1}), for (\ref{generalized GC - dependent - ordinary - eq}) to be true it suffices to show that both $\int_0^1 l_i^\varepsilon\,d(\I-\widehat G_n)$ and $\int_0^1 u_i^\varepsilon\,d(\widehat G_n-\I)$ converge $\overline\pr$-a.s.\ to zero for every $i=1,\ldots,m_\varepsilon$. The second convergence follows from the representation
\begin{eqnarray*}
    \int_0^1 u_i^\varepsilon\,d(\widehat G_n-\I)
    & = & \frac{1}{n}\sum_{j=1}^n\Big(w(t_{i-1}^\varepsilon)\eins_{[0,t_{i-1}^\varepsilon]}(U_j)-\ex_{\overline\pr}\Big[w(t_{i-1}^\varepsilon)\eins_{[0,t_{i-1}^\varepsilon]}(U_1)\Big]\Big)\\
    &  & +\,\frac{1}{n}\sum_{j=1}^n\Big(w(U_j)\eins_{(t_{i-1}^\varepsilon,t_i^\varepsilon]}(U_j)-\ex_{\overline\pr}\Big[w(U_1)\,\eins_{(t_{i-1}^\varepsilon,t_i^\varepsilon]}(U_1)\Big]\Big)
\end{eqnarray*}
and Theorem \ref{Rios SLLN}, noting that (\ref{generalized GC - dependent - ordinary - cond}) implies (\ref{Rios SLLN - cond}) for both $\xi_j:=w(t_{i-1}^\varepsilon)\eins_{[0,t_{i-1}^\varepsilon]}(U_j)$ and $\xi_j:=w(U_j)\eins_{(t_{i-1}^\varepsilon,t_i^\varepsilon]}(U_j)$. The verification of the first convergence is even easier. This completes the proof of Theorem \ref{generalized GC - dependent - ordinary}.



\end{document}